\numberwithin{equation}{section}
\theoremstyle{plain}
\newtheorem{Th}{Theorem}[section]
\newtheorem{Prop}[Th]{Proposition}
 \theoremstyle{definition}
\newtheorem{Def}[Th]{Definition}
\newtheorem{Rem}[Th]{Remark}
\newtheorem{?}[Th]{Problem}
\newcommand*\R{\mathbb{R}}
\newcommand*\Om{\Omega}
\newcommand{\Div}{\text{div}_x}
\newcommand{\Divh}{\text{div}_{x_h}}
\newcommand{\vectoru}{\mathbf{u}}
\newcommand{\vectorv}{\mathbf{v}}
\newcommand{\vectorm}{\mathbf{m}}
\newcommand{\Epsilon}{\mathcal{E}}
\newcommand{\dx}{\text{ d}x}
\newcommand{\dt}{\text{ d}t\;}
\newcommand{\vectorphi}{\pmb{\varphi}}
\newcommand{\curlh}{\text{curl}_{x_h}}
\begin{document}

\title{Multiple scales and singular limits of perfect fluids}

\author{Nilasis Chaudhuri
	\thanks{E-mail:\tt chaudhuri@math.tu-berlin.de}
}
\maketitle

\centerline{Technische Universit{\"a}t, Berlin}
\centerline{Institute f{\"u}r Mathematik, Stra\ss e des 17. Juni 136, D -- 10623 Berlin, Germany.}

\begin{abstract}
In this article our goal is to study the singular limits for a scaled barotropic Euler system modelling a rotating, compressible and inviscid fluid, where Mach number $=\epsilon^m $, Rossby number $=\epsilon $ and Froude number $=\epsilon^n $ are proportional to a small parameter $\epsilon\rightarrow 0$. The fluid is confined to an infinite slab, the limit behaviour is identified as the incompressible Euler system. For \emph{well--prepared} initial data, the convergence is shown on the life span time interval of the strong solutions of the target system, whereas a class of generalized \emph{dissipative solutions} is considered for the primitive system. The technique can be adapted to the compressible Navier--Stokes system in the subcritical range of the adiabatic exponent $\gamma$ with $1<\gamma\leq\frac{3}{2}$, where the weak solutions are not known to exist.

\end{abstract}

{\bf Keywords:} Compressible Euler system, rotating fluids, dissipative solution, low Mach and Rossby number limit, multiple scales, compressible Navier--Stokes system.  \\

{\bf AMS classification:} Primary: 76U05; Secondary: 35Q35, 35D99, 76N10

\section{Introduction}
We study models of rotating fluids as described in {Chemin et.al.} \cite{CDGG2006}. Let $T>0$ and $\Omega (\subset \mathbb{R}^3) = \mathbb{R}^2 \times (0,1)$ be an infinite slab. We consider the scaled {\emph{compressible Euler equation}} in time-space cylinder $Q_T=(0,T)\times \Omega$ describing the time evolution of the mass density $\varrho=\varrho(t,x)$ and the momentum field $\vectorm=\vectorm(t,x)$ of a rotating inviscid fluid with axis of rotation $\mathbf{b}=(0,0,1)$:

\begin{itemize}
	\item \textbf{Conservation of Mass: }
	\begin{align}
	\partial_t \varrho + \text{div}_x \textbf{m} &=0. \label{cee:cont}
	\end{align}
	\item \textbf{Conservation of Momentum:}
	\begin{align}
	\partial_t \textbf{m} + \Div \left( \frac{\textbf{m} \otimes \textbf{m} }{\varrho} \right) +\frac{1}{\text{Ma}^2}\nabla_x p(\varrho)+\frac{1}{\text{Ro}} \mathbf{b} \times \textbf{m} = {\frac{1}{\text{Fr}^2}\varrho \nabla_{x}G.} \label{cee:mom:sc}
	\end{align}

\end{itemize}
\begin{itemize}
	\item The scaled system contains characteristic numbers:
	\subitem Ma-- Mach number,
	\subitem Ro-- Rossby number,
	\subitem Fr-- Froude number.\\
	Here  we consider,
	\begin{align}\label{scaling}
	\text{Ma} \approx \epsilon^m,\; \text{Ro} \approx \epsilon,\; \text{Fr} \approx \epsilon^n \text{ for } \epsilon>0,\; m,n > 0.
	\end{align}
	\item \textbf{Pressure Law:} The pressure $p$ and the density $\varrho$ of the fluid are interrelated by
	the standard isentropic law
		\begin{align}\label{p-condition}
		\begin{split}
		p(\varrho)=a \varrho^\gamma,\; a>0,\; \gamma> 1.
		\end{split}
		\end{align}
	
 	\item \textbf{Boundary condition:} Here we consider slip condition  on the horizontal boundary, i.e.
 	\begin{align}\label{BC}
 	\vectorm \cdot \mathbf{n}=0,\; \mathbf{n}=(0,0,\pm 1).
 	\end{align}
 	
 	\item \textbf{Far field condition:} Let us introduce the notation $x=(x_h,x_3)$ and $P_h(x)=x_h$. For each $\epsilon > 0$, we identify a static solution that satisfies \eqref{cee:cont}-\eqref{cee:mom:sc} with \eqref{scaling}. More 
	specifically, a static solutions is a pair $(\tilde{\varrho}_{\epsilon},\mathbf{0})$, where the density 
	profile $\tilde{\varrho}_{\epsilon}$ satisfies
 	\begin{align}\label{static-solution}
 	\nabla_{x} p(\tilde{\varrho}_\epsilon)=  \epsilon^{2(m-n)}\tilde{\varrho}_\epsilon\nabla_{x}G.
 	\end{align}
	In general, there are infinitely many static solutions for a given potential $G$.
 	 We assume the \emph{far field} condition as,
 	\begin{align}\label{far_field}
 	|\varrho - \tilde{\varrho}_\epsilon| \rightarrow 0, \; \vectorm \rightarrow \mathbf{0} \text{ as } \vert x_h \vert \rightarrow \infty.
 	\end{align}
 	\item \textbf{Initial data:}
 	For each $\epsilon>0$, we supplement the initial data as
 	\begin{align}\label{initial condition}
 	{\varrho(0,\cdot)=\varrho_{\epsilon, 0},\; \vectorm(0,\cdot)= \vectorm_{\epsilon, 0}.}
 	\end{align}
 	\item \textbf{Choice of $G$:}  As a matter of fact, the driving potential $G$ can be seen as a sum of the centrifugal force proportional to the norm of the horizontal component of the spatial variable i.e. $(x_1^2 + x_2^2)$  and the gravitational force acting in the vertical direction $x_3$. We omit the effect of the centrifugal force in the present paper motivated by certain meteorological models. Instead we consider
 	\begin{align}
 	G(x)=-x_3\text{ in } \Om 
 	\end{align}  
corresponding to the gravitational force acting in the vertical direction.
\end{itemize}

We consider singular limit problem for $\epsilon\rightarrow 0$ in the multiscale regime:
\begin{align}\label{multiscale}
\frac{m}{2}>n\geq1.
\end{align}
Thus we study the effect of \emph{low Mach number limit} (also called \emph{incompressible limit}), \emph{low Rossby number limit} and \emph{low Froude number limit} acting \emph{simultaneously} on the system \eqref{cee:cont}-\eqref{cee:mom:sc}.

Formally, we observe that low Mach number limit regime indicates the fluid becomes \emph{incompressible} and low Rossby number limit indicates fast rotation of the fluid and as a consequence of that fluid becomes \emph{planner} (two-dimensional). \par
As solutions of the (primitive) compressible Euler systems are expected to develop singularities (shock waves) in a finite time, 
there are two approaches to deal with the singular limit problem.
\begin{itemize}
	\item[I.]  The first approach consists of considering  \emph{classical(strong)} solutions of the \emph{primitive system} and expecting it converges to the classical solutions of the \emph{target system}. Here, the main and highly non-trivial issue is to ensure that the lifespan of the strong solutions is bounded below away from zero uniformly with
	respect to the singular parameter.	
	\item[II.] The second approach is based on the concept of \emph{weak},  \emph{measure--valued} or  \emph{dissipative} solutions  of the \emph{primitive system}. Under proper choice of initial data one can show convergence 
	provided the \emph{target system} admits smooth solution.
\end{itemize}

For the first approach in the low Mach number limits we have results by  Ebin \cite{E1977}, Kleinermann and Majda \cite{KM1981}, Schochet \cite{S1986}, and many others. For rotating fluids there are results by Babin, Mahalov and Nicolaenko \cite{BMN1999, BMN2001} and Chemin et. al. \cite{CDGG2006}.\par

In the case of second approach, most of the results dealing with weak solutions have been studied for compressible Navier--Stokes system with additional consideration of high Reynolds number limit. For rotating fluids there are several results, see Feireisl, Gallagher and Novotn{\'y}\cite{FGN2012}, Feireisl et. al. \cite{FGGvN2012}, Feireisl and Novotn{\'y} \cite{FN2014,FN2014ii} and Li\cite{L2019}.\par
Since existence of global-in-time weak solution of compressible Euler equation satisfying energy inequality is still open for general initial data. Hence it is important to consider \emph{measure--valued solution} or newly developed \emph{dissipative solution} for this system. The concept of measure--valued solutions has been studied in variuos context, like, analysis of numerical schemes etc. In the following articles by Alibert and Bouchitt{\'e} \cite{AB1997}, Gwiazda, {\'S}wierczewska-Gwaizda and Wiedemann \cite{PAW2015},  B\v r{e}zina and Feireisl \cite{BF2018b}, B\v rezina \cite{B2018}, Basari\'c \cite{Bd2019}, Feireisl and Luk\' a\v cov\'a-Medvidov\'a \cite{FL2018} we observe the development of theory on measure valued solution for different models describing compressible fluids mainly with the help of Young measures.\par

 Recently, Feireisl, Luk\' a\v cov\'a-Medvidov\'a and Mizerov\'a in \cite{FLM2019} and Breit, Feireisl and Hofmanov\'a \cite{BeFH2019} give a new definition for compressible Euler system, termed as \emph{dissipative solution} without involving Young measures.\par
 
The advantages to consider the second approach are,
\begin{itemize}
	\item \emph{Weak} or \emph{measure valued} solutions to the \emph{primitive system} exist globally in time. Hence the result depends only on the life span of the \emph{target problem} that may be finite.
	\item The convergence holds for a large class of generalized solutions which indicates certain stability of the limit solution of the target system. 
\end{itemize}

In particular, the results involving generalized solutions are better in the sense that convergence holds for a larger class of solutions and on the life span of the limit system. \par

There is a series of works dealing with the low Mach number limit in the framework of measure--valued solutions.
In  Feireisl, Klingenberg and Markfelder \cite{FKM2019}, Bruell and Feireisl \cite{BrF2018}, B\v r{e}zina and M\' acha \cite{BM2018}, it is shown that \emph{measure--valued solution} of \emph{primitive system} which describes some compressible inviscid fluid converges to \emph{strong solution} of incompressible \emph{target system} under consideration of suitable initial data. The `single--scale' limit of our system i.e. $m=1$ and $G=0$, has been studied by Ne{\v c}asov{\'a} and Tong in \cite{NT2018}, again with the help of measure--valued solution. \par

The framework of measure--valued solutions can be applied also in the context of the Navier--Stokes system. 
Although \emph{weak solutions} are available here, their existence is constrained by the technical condition for the 
adiabatic exponent $\gamma>\frac{3}{2}$. To handle this technical restriction,  Feireisl et. al. \cite{FPAW2016} introduced the concept of \emph{dissipative measure-valued solution} in terms of the Young measure. Here we use a slightly 
different approach introducing \emph{dissipative solution} for Navier--Stokes system without an explicit presence of the Young measure. In such a way, we extend the convergence result to the Navier--Stokes system with high Reynolds number limit in the 
regime where the existence of weak solutions is not known. \par
In our approach, it is very important to consider proper initial data mainly termed as \emph{well-prepared} and \emph{ill-prepared} initial data. Feireisl and Novotn{\'y} in \cite{FN2009b},  explain that for \emph{ill-prepared data} the presence of Rossby-acoustic waves play an important role in analysis of singular limits. Meanwhile this effect was absent in \emph{well-prepared} data. Here we deal with the \emph{well-prepared} initial data.\par

Our main goal is to prove that under suitable choice of initial data a \emph{dissipative solution} of compressible rotating Euler system in low Mach and low Rossby regime converges to strong solution of incompressible Euler system in 2D. Hence our plan for the article is,
\begin{itemize}
	\item[1.] Derivation of limit system.
	\item[2.] Definition of dissipative solution.
	\item[3.] Singular limit for `\emph{well-prepared}' data.
	\item[4.] Extension to Navier--Stokes system.
\end{itemize}

\subsection{Notation:}
\begin{itemize}
	\item To begin, we introduce a function
	$\chi = \chi(\varrho)$ such that
	\begin{align}
	\begin{split}
	\chi(\cdot) \in C_c^{\infty}	(0, \infty),\; 0 \leq \chi \leq 1,\; \chi(\varrho) = 1 \ \mbox{if}\ \frac{1}{2} \leq \varrho \leq 2.
	\end{split}
	\end{align}
	For a function, $H = H(\varrho, \vectoru)$ we set
	\begin{align}
	\begin{split}
	[H]_{\text{ess}}= \chi(\varrho) H(\varrho, \vectoru),\; [H]_{\text{res}}= (1-\chi(\varrho)) H(\varrho, \vectoru).
	\end{split}
	\end{align}	
	\item Without loss of generality, we assume the `normalized' setting for $p$ as
	\begin{align}
	p^{\prime}(1)=1.
	\end{align} Let us define pressure potential as,
	\begin{align}\label{Pressure_potential}
	P(\varrho)=\varrho \int_{1}^{\varrho} \frac{p(z)}{z^2} \; \text{d}z.
	\end{align}
	As a consequence of that we have,
	\begin{align}
	\varrho P^{\prime}(\varrho) -P(\varrho)= p(\varrho) \text{ and } \varrho P^{\prime \prime} (\varrho)=p^{\prime}(\varrho) \text{ for } \varrho>0.
	\end{align}
\end{itemize}

\section{Derivation of Limit systems}
Here is an informal justification how we obtain the \emph{target system}.
First we note that $(\tilde{\varrho}_{\epsilon},0)$ is a \emph{steady state} solution for \eqref{cee:cont}-\eqref{cee:mom:sc}.

Let us consider
\begin{align*}
&\varrho_{\epsilon}= \tilde{\varrho}_{\epsilon} + \epsilon^m \varrho_{\epsilon}^{(1)}+ \epsilon^{2m}  \varrho_{\epsilon}^{(2)}+ \cdots,\\
&\vectorm_\epsilon= \tilde{\varrho}_{\epsilon}\vectorv + \epsilon^{m}  \vectorm_\epsilon^{(1)} + \epsilon^{2m} \vectorm_\epsilon^{(2)} + \cdots.
\end{align*}

As a consequence of the above we obtain,
\begin{align*}
p(\varrho_{\epsilon})= p(\tilde{\varrho}_\epsilon) + \epsilon^m p^{\prime}(\tilde{\varrho}_{\epsilon}) \varrho_{\epsilon}^{(1)} + \epsilon^{2m} (  p^{\prime}(\tilde{\varrho}_{\epsilon})\varrho_{\epsilon}^{(2)}+  \frac{1}{2}p^{\prime\prime}(\tilde{\varrho}_{\epsilon})(\varrho_{\epsilon}^{(1)})^{2}) + o(\epsilon^{3m}).
\end{align*}
We have static solution $(\tilde{\varrho}_\epsilon,\mathbf{0})$ 
\begin{align}
\nabla_{x} p(\tilde{\varrho}_\epsilon)= \epsilon^{2(m-n)}\tilde{\varrho}_\epsilon\nabla_{x}G.
\end{align}

Clearly condition on $m\text{ and }n$ in \eqref{multiscale} indicate $\lim_{\epsilon\rightarrow0}\nabla_{x} P^\prime(\tilde{\varrho}_\epsilon)=0$.

Without loss of generality we assume, 
\begin{align*}
\tilde{\varrho}_\epsilon \approx \bar{\varrho} + \epsilon^{2(m-n)}.
\end{align*}

So we obtain,
\begin{align*}
\bar{\varrho}\Div \vectorv + \epsilon^m (\partial_{t} \varrho_{\epsilon}^{(1)}+ \Div (\vectorm_\epsilon^{(1)}) + o(\epsilon^{2m})=0
\end{align*}
and
\begin{align}\label{case3-central}
\begin{split}
&\bar{\varrho}(\partial_{t} \vectorv + (\vectorv\cdot \nabla_x)\vectorv) + \nabla_x (  p^{\prime}(\bar{\varrho})\varrho_{\epsilon}^{(2)}+  \frac{1}{2}p^{\prime\prime}(\bar{\varrho})(\varrho_{\epsilon}^{(1)})^{2}) + \epsilon^{m-1} \mathbf{b} \times \vectorm_\epsilon^{(1)}\\
&+ \frac{1}{\epsilon^m}p^{\prime}(\bar{\varrho}) \nabla_x \varrho_{\epsilon}^{(1)}+\frac{1}{\epsilon} ( \bar{\varrho}\mathbf{b} \times \vectorv) + o(\epsilon)=0.
\end{split}
\end{align}

Let $\mathbb{H}$ be the Helmontz projection, then  we have,
\begin{align}
\mathbb{H}\bigg(\partial_t(\varrho_\epsilon \mathbf{u}_\epsilon) + \Div \bigg( \frac{ \mathbf{m}_\epsilon  \otimes \mathbf{m}_\epsilon}{\varrho_\epsilon} \bigg)+\frac{1}{\epsilon } \mathbf{b} \times \varrho_\epsilon \mathbf{u}_\epsilon \bigg)=\mathbb{H}\bigg( \frac{1}{\epsilon ^{2n}}\varrho_\epsilon  \nabla_{x}G\bigg). 
\end{align}	
Assuming $ \vectorm_{\epsilon} \rightarrow \bar{\varrho} \vectorv$  in some strong sense, multiplying the above equation by $\epsilon$ and using our standard expansion technique, we obtain,
\begin{align*}
\mathbb{H}[\mathbf{b}\times \bar{\varrho}\vectorv]=0
\end{align*}
From above relations we get,
\begin{align*}
&\mathbf{b}\times \vectorv= \nabla_{x} \psi,\\
&\psi_{x_3}=0,\; \psi(x)=q(x_h), \nabla_{x_h}^{\perp}\psi =\vectorv_h, \vectorv_h=(v_1,v_2),\\
&\Divh \vectorv_h=0, v_{3_{x_3}}=0, \text{ with } \nabla_{x_h}^{\perp}\equiv (-\frac{\partial}{\partial x_2},\frac{\partial}{\partial x_1}).
\end{align*}
Thus we have,
\begin{align*}
v_{1_{x_3}}=0,\; v_{2_{x_3}}=0,\; \vectorv(x)=\vectorv(x_h).
\end{align*}
Also boundary condition will lead us to conclude $v_3(x_h,x_3)=0. $
Thus we have $ \vectorv=(\vectorv_h(x_h),0)$. Finally we obtain,
\begin{align}\label{lim_sys_caseIII}
\begin{split}
&\Divh \vectorv_h =0 ,\\ 
&\partial_{t} \vectorv_h + (\vectorv_h \cdot \nabla_{x_h})\vectorv_h + \nabla_{x_h} \Pi=0.
\end{split}
\end{align}
The above system is 2D Euler equation.\\

\section{Definition of dissipative solution}

\subsection{Choice of Static Solution:} 
As we have noticed during the informal discussion a static solution $(\tilde{\varrho}_\epsilon,\mathbf{0})$ satisfies,
\begin{align*}
&\nabla_{x} p(\tilde{\varrho}_\epsilon)= \epsilon^{2(m-n)}\tilde{\varrho}_\epsilon\nabla_{x}G\\
\implies &\nabla_{x} P^{\prime}(\tilde{\varrho}_\epsilon)= \epsilon^{2(m-n)}\nabla_{x}G.\\
\end{align*}
As a consequence of $G=(0,0,-x_3)$, we have $ \tilde{\varrho}_\epsilon(x)= \tilde{\varrho}_\epsilon(x_3).$
With an extra assumption $\tilde{\varrho}_\epsilon(0)=1$ we obtain
\begin{align}
P^{\prime}(\tilde{\varrho}_\epsilon)= -\epsilon^{2(m-n)} x_3 + P^{\prime}(1).
\end{align}
Finally we choose \emph{static solution} $\tilde{\varrho}_\epsilon$ with the property,

\begin{align}\label{staic:soln:final}
\sup_{  x_3\in [0,1]} \vert \tilde{\varrho}_\epsilon(x_3)-1 \vert \leq \epsilon^{2(m-n)},\; \sup_{  x_3\in [0,1]} \vert \nabla_{x} \tilde{\varrho}_\epsilon(x_3) \vert \leq \epsilon^{2(m-n)}.
\end{align}
As $m > n$, asymptotically, the static solution approaches the constant state $\tilde{\varrho} = 1$ as 
$\epsilon \to 0$.\par 
Now we will give the definition of \emph{dissipative solution} for the system of our consideration. 
\begin{Def}\label{dmv_defn}
	Let $\epsilon>0$ and $\tilde{\varrho}_\epsilon>0$. We say functions $\varrho_{\epsilon},\vectoru_{\epsilon}$  with,
	\begin{align}
	\varrho_{\epsilon}-\tilde{\varrho}_\epsilon \in  &C_{\text{weak}}([0,T];L^2+L^{\gamma}(\Om)),\;\varrho_\epsilon \geq 0,\; \vectorm_{\epsilon} \in   C_{\text{weak}}([0,T];L^2+L^{\frac{2\gamma}{\gamma+1}}(\Om)),
	\end{align}
	are a \emph{dissipative solution} to the compressible Euler equation \eqref{cee:cont}-\eqref{far_field} with initial data $\varrho_{0,\epsilon}, (\varrho \vectoru)_{0,\epsilon}$ satisfying,
	\begin{align}\label{fe_id}
	\begin{split}
	\varrho_{0,{\epsilon}}\geq 0,\; E_{0,{\epsilon}}=\int_{ \Om} \bigg( \frac{1}{2} \frac{ \vert \vectorm_{0,\epsilon} \vert^2}{\varrho_{0,\epsilon}} + P(\varrho_{0,{\epsilon}})-(\varrho_{0,{\epsilon}}-\tilde{\varrho}_\epsilon)P^{\prime}(\tilde{\varrho}_\epsilon)-P(\tilde{\varrho}_\epsilon)\bigg)\dx < \infty,
	\end{split}
	\end{align}
	if there exist the \emph{turbulent defect measures}
	\begin{align}
	\begin{split}
	&{\mathfrak{R}_{m_{\epsilon}} \in L^{\infty}(0,T;\mathcal{M}^+({\overline{\Om};\mathbb{R}^{d\times d}_{\text{sym}}}))},\; \mathfrak{R}_{e_{\epsilon}} \in L^{\infty}(0,T;\mathcal{M}^{+}(\overline{\Om})),
	\end{split}
	\end{align}
	satisfying compatibility condition
	\begin{align}\label{compat_turb_mes}
	\lambda_1 \text{trace}\mathfrak{R}_{m_{\epsilon}} \leq \mathfrak{R}_{e_{\epsilon}} \leq \lambda_2 \text{trace} \mathfrak{R}_{m_{\epsilon}},\; \lambda_1,\lambda_2>0,
	\end{align}
	such that the following holds,
	\begin{itemize}
		\item \textbf{Equation of Continuity:}
		For any $\tau \in (0,T) $ and any $\varphi \in C_{c}^{1}([0,T)\times \bar{\Om})$ {it holds}
		\begin{align}\label{continuity_eqn_diss}
		\begin{split}
		&\big[ \int_{ \Om}{ \varrho_{\epsilon}} \varphi \dx\big]_{t=0}^{t=\tau}=
		\int_0^{\tau} \int_{\Om} [ \varrho_{\epsilon} \partial_t \varphi +  \vectorm_{\epsilon} \cdot \nabla_x \varphi] \dx \dt .
		\end{split}
		\end{align}
		\item \textbf{Momentum equation:} For any $\tau\in (0,T)$ and any $\pmb{\varphi} \in C^{1}_c([0,T)\times \Om;\mathbb{R}^d)$ with $\pmb{\varphi} \cdot \mathbf{n}|_{\partial\Om}=0,$ it holds
		\begin{align}\label{momentum_eqn_diss}
		\begin{split}
		&\bigg[\int_{\Om}  \vectorm_{\epsilon}(\tau,\cdot)\cdot \vectorphi(\tau,\cdot) \dx\bigg]_{t=0}^{t=\tau} \\
		&=\int_0^{\tau}\int_{\Om} \bigg[ \vectorm_{\epsilon}\cdot \partial_{t} \vectorphi + \bigg(\frac{ \vectorm_{\epsilon} \otimes \vectorm_{\epsilon}}{\varrho_{\epsilon}}\bigg) : \nabla_x \vectorphi + \frac{1}{\epsilon^{2m}}p(\varrho_{\epsilon}) \Div \vectorphi + \frac{1}{\epsilon}\mathbf{b} \times \vectorm_{\epsilon} \cdot \vectorphi \bigg] \dx \dt \\
		&+\int_0^{\tau}\int_{\Om}  \frac{1}{\epsilon^{2n}} \varrho_{\epsilon} \nabla_{x}G\cdot \vectorphi \dx \dt+\int_0^{\tau}\int_{\overline{\Om}} \nabla_x \vectorphi : \text{d}\mathfrak{R}_{m_{\epsilon}}   \dt .
		\end{split}
		\end{align}
		\item \textbf{Energy inequality:}  The total energy $E$ is defined in $[0,T)$ as,
		\begin{align*}
		E_{\epsilon}(\tau)= \int_{\Om} &\bigg( \frac{1}{2} \frac{ {\vert \vectorm_{\epsilon} \vert^2}}{\varrho_\epsilon} +\frac{1}{\epsilon^{2m}}(P(\varrho_{\epsilon})-(\varrho_{\epsilon}-\tilde{\varrho}_\epsilon)P^{\prime}(\tilde{\varrho}_{\epsilon})) -P(\tilde{\varrho}_{\epsilon})\bigg)(\tau ,\cdot)\dx.
		\end{align*}
			It satisfies,
			\begin{align}\label{energy_inequality}
			E_\epsilon (\tau) + \int_{\overline{\Omega}}  {\rm d \; } \mathfrak{R}_{e_\epsilon} (\tau, \cdot) \leq E_{0,\epsilon}
			\end{align}
{for a.a. $\tau > 0$.}		
	\end{itemize}
\end{Def}
\begin{Rem}
	It is important to define the function
	\[ (\varrho_\epsilon,\vectorm_{\epsilon})\mapsto \frac{1}{2} \frac{\vert \vectorm_\epsilon \vert^2}{\varrho_{\epsilon}} \] 
	on the vacuum set as
	\[ (\varrho_{{\epsilon}},\vectorm_\epsilon )\mapsto \frac{1}{2}\frac{\vert \vectorm_\epsilon\vert^2}{\varrho_\epsilon} =
	\begin{cases} 
	\frac{1}{2}\frac{\vert \vectorm_\epsilon \vert^2}{\varrho_\epsilon}  & \text{ if } ,\varrho_\epsilon \neq 0, \vectorm_\epsilon \neq 0 ,\\
	0 & \text{ if } \varrho_\epsilon =0,\; \vectorm_\epsilon=0 ,\\
	\infty & \text{ if }\varrho_\epsilon=0,\; \vectorm_\epsilon\neq 0.
	\end{cases}
	\]
	It follows from the energy inequality \eqref{energy_inequality} that for each $\epsilon>0$,
	\[ \mathcal{L}^4 \bigg( \bigg\{(t,x)\in (0,T)\times \Om \bigg| \frac{1}{2}\frac{\vert \vectorm_\epsilon \vert^2}{\varrho_\epsilon}=\infty   \bigg\}\bigg) =0 ,\]
	where $\mathcal{L}^4$ is the Lebesgue measure in $\R^4$.
\end{Rem}

\begin{Th}
	Suppose $\Om$ be the domain specified above and pressure follows \eqref{p-condition}. If $(\varrho_{0,\epsilon},\vectorm_{0,\epsilon})$ satisfies \eqref{fe_id}, then there exists \emph{dissipative solution} as defined in definition \eqref{dmv_defn}.
\end{Th}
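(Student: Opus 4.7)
The plan is to construct the dissipative solution via a vanishing--viscosity approximation from the compressible Navier--Stokes system. For fixed $\epsilon>0$, I would regularize the primitive system on the unbounded slab by introducing viscous dissipation $\mu_{\delta}\Delta_{x}\vectoru+(\mu_{\delta}+\lambda_{\delta})\nabla_{x}\Div\vectoru$ with $\mu_{\delta},\lambda_{\delta}\to 0$, an artificial pressure $\delta\varrho^{\beta}$ with $\beta>\max\{\gamma,\tfrac{3}{2}\}$ to by-pass the subcritical restriction of weak-solution theory, and a horizontal truncation $\Om_{R}=(B_{R}\cap\mathbb{R}^{2})\times(0,1)$ endowed with the slip boundary condition \eqref{BC}. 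After approximating the initial data consistently with \eqref{fe_id} and the far-field decay \eqref{far_field}, the Lions--Feireisl theory provides, for each $(\delta,R)$, a global finite-energy weak solution $(\varrho_{\delta,R},\vectorm_{\delta,R})$ satisfying the associated energy inequality.

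I would then use this energy inequality -- whose dominant terms are independent of $\delta$ and $R$ -- to deduce that the density fluctuation lies in $L^{\infty}(0,T;L^{2}+L^{\gamma}(\Om))$ and the momentum in $L^{\infty}(0,T;L^{2}+L^{\frac{2\gamma}{\gamma+1}}(\Om))$, uniformly. Weak-$*$ compactness together with a diagonal argument as $R\to\infty$ and $\delta\to 0$ yields a limit $(\varrho_{\epsilon},\vectorm_{\epsilon})$ in the required function spaces; the weak time continuity asserted in Definition \ref{dmv_defn} follows by testing the equations against fixed smooth functions and exploiting equi-continuity in time. The continuity equation \eqref{continuity_eqn_diss} is linear and passes to the limit immediately, as do the Coriolis and the gravity forcing in \eqref{momentum_eqn_diss}.

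The defect measures capture the loss of strong compactness in the convective and pressure nonlinearities. I would set
\[
\mathfrak{R}_{m_{\epsilon}} := \overline{\tfrac{\vectorm\otimes\vectorm}{\varrho}} - \tfrac{\vectorm_{\epsilon}\otimes\vectorm_{\epsilon}}{\varrho_{\epsilon}} + \tfrac{1}{\epsilon^{2m}}\bigl(\overline{p(\varrho)}-p(\varrho_{\epsilon})\bigr)\mathbb{I},
\]
\[
\mathfrak{R}_{e_{\epsilon}} := \tfrac{1}{2}\Bigl(\overline{\tfrac{|\vectorm|^{2}}{\varrho}} - \tfrac{|\vectorm_{\epsilon}|^{2}}{\varrho_{\epsilon}}\Bigr) + \tfrac{1}{\epsilon^{2m}}\bigl(\overline{P(\varrho)}-P(\varrho_{\epsilon})\bigr),
\]
where the overlines denote the weak-$*$ limits of the corresponding nonlinearities along the chosen subsequence. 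Both are nonnegative Radon measures by the convexity of $\vectorm\otimes\vectorm/\varrho$ (in the sense of symmetric matrices) and of $P$. The compatibility inequality \eqref{compat_turb_mes} is then a direct computation: the convective part contributes the ratio $\mathfrak{R}_{e_{\epsilon}}/\trace\mathfrak{R}_{m_{\epsilon}}=\tfrac{1}{2}$, and the identity $p(\varrho)-(\gamma-1)P(\varrho)=a\varrho$, whose right-hand side is linear in $\varrho$ and hence strongly continuous under the weak limit, forces the pressure part to contribute the ratio $\tfrac{1}{3(\gamma-1)}$. Setting $\lambda_{1}=\min\{\tfrac{1}{2},\tfrac{1}{3(\gamma-1)}\}$ and $\lambda_{2}$ the analogous maximum secures \eqref{compat_turb_mes}.

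With the defects identified, the momentum equation \eqref{momentum_eqn_diss} and the energy inequality \eqref{energy_inequality} follow by passing to the limit in the weak formulation of the regularized system; the viscous dissipation is nonnegative and the artificial pressure contribution vanishes in the limit thanks to the uniform energy bound. The principal obstacle I foresee is the combined double limit $R\to\infty$, $\delta\to 0$: one must propagate the far-field decay \eqref{far_field} through the energy estimate so that the limiting defect measures remain tight near horizontal infinity and no mass escapes, which requires a careful simultaneous choice of approximating initial data and extracting subsequence. A related subtlety is that the compatibility condition \eqref{compat_turb_mes} genuinely mixes the convective and pressure defects, each with its own natural $\mathfrak{R}_{e}/\trace\mathfrak{R}_{m}$ ratio, so the constants $\lambda_{1},\lambda_{2}$ must accommodate both contributions jointly rather than separately.
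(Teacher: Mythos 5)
Your construction follows essentially the same route as the paper: the paper does not write the proof out but refers to Breit--Feireisl--Hofmanov\'a \cite{BeFH2019,BeFH2019(2)} and to Basari\'c \cite{Bd2019} for the adaptation to the unbounded slab, and that construction --- like the paper's own existence proof for the Navier--Stokes dissipative solutions in Section 5 --- is exactly your scheme: a viscous/artificial-pressure approximation, uniform energy bounds in the spaces $L^{\infty}(0,T;L^{2}+L^{\gamma})$ and $L^{\infty}(0,T;L^{2}+L^{\frac{2\gamma}{\gamma+1}})$, weak-$*$ limits, defect measures built from the lost compactness of $\vectorm\otimes\vectorm/\varrho$ and $p(\varrho)$, and the compatibility condition \eqref{compat_turb_mes} obtained from the ratios $\tfrac12$ (convective part) and $\tfrac{1}{3(\gamma-1)}$ (pressure part), exactly as you compute.

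One step, as stated, is not justified: the assertion that the artificial pressure contribution $\delta\varrho^{\beta}$ ``vanishes in the limit thanks to the uniform energy bound.'' The energy bound only gives $\delta\varrho_{\delta}^{\beta}$ bounded in $L^{\infty}(0,T;L^{1}(\Om))$, hence (up to a subsequence) weak-$*$ convergence in $L^{\infty}(0,T;\mathcal{M}^{+}(\overline{\Om}))$ to a nonnegative measure $\zeta$ that need not be zero; this is precisely how the paper handles the analogous term in its Navier--Stokes existence proof, where $\zeta$ is retained inside $\mathfrak{R}_{m}$ and, with weight $\frac{1}{\Gamma-1}$, inside $\mathfrak{R}_{e}$. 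To close your argument you must either prove $\delta\varrho_{\delta}^{\beta}\to 0$ in $L^{1}_{\text{loc}}$ via refined (Bogovskii-type) pressure estimates --- which is delicate on the unbounded slab and not needed --- or simply absorb $\zeta$ into both defect measures and enlarge $\lambda_{1},\lambda_{2}$ so that they also accommodate the corresponding ratio $\tfrac{1}{3(\beta-1)}$. With that correction your proof is complete and coincides with the cited construction.
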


The proof this theorem follows in similar lines of Breit, Feireisl and Hofmanov{\'a} as in \cite{BeFH2019, BeFH2019(2)}. We have to adopt it for unbounded domain as suggested in Basari{\'c} \cite{Bd2019}.

\section{ Singular limit for "Well-prepared" initial Data }

\subsection{Target System }
Taking motivation from \cite{FN2014ii} we expect the target system as,
\begin{align}\label{target_sys_2D euler}
\begin{split}
&\Divh \vectorv_h =0 , \text{ in } \R^2,\\ 
&\partial_{t} \vectorv_h + (\vectorv_h \cdot \nabla_{x_h})\vectorv_h + \nabla_{x_h} \Pi=0,\text{ in } \R^2.
\end{split}
\end{align}

The result stated below by Kato and Lai in \cite{KL1984} ensures the existence and uniqueness of Euler system in 2D:
\begin{Prop}
	Let
	\begin{align*}
	\vectorv_{0} \in W^{k,2}(\R^2;\R^2),\; k\geq 3,\; \Divh \vectorv_{0}=0
	\end{align*}
	be given. Then the system \eqref{target_sys_2D euler} supplemented with initial data $\vectorv_h(0)= \vectorv_{0}$ admits regular solution $(\vectorv_h,\Pi)$, unique in the class 
	\begin{align}\label{reg_tar_system}
	\begin{split}
	&\vectorv_h \in C([0,T];W^{k,2}(\R^2;\R^2)),\; \partial_{t} \vectorv_h \in C([0,T];W^{k-1,2}(\R^2;\R^2)), \\
	&\; \Pi \in C([0,T];W^{k,2}(\R^2)),  
	\end{split}
	\end{align}
	with $\Divh \vectorv_h=0$.
\end{Prop}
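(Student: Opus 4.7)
The plan is to proceed by the classical vorticity method. The key observation is that in two dimensions the scalar vorticity $\omega = \partial_{1} v_2 - \partial_{2} v_1$ satisfies the pure transport equation
\begin{equation*}
\partial_t \omega + (\vectorv_h \cdot \nabla_{x_h}) \omega = 0,
\end{equation*}
obtained by applying $\curlh$ to the momentum equation and using $\Divh \vectorv_h = 0$ together with $\curlh \nabla_{x_h} \Pi = 0$. Since $\omega$ is transported by a divergence-free field, $\|\omega(t,\cdot)\|_{L^p(\R^2)}$ is conserved for every $p \in [1,\infty]$, providing a global-in-time a priori bound that separates the 2D from the 3D theory.

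First I would build approximate solutions by a Friedrichs mollification of the nonlinearity (or a Galerkin projection), for which short-time existence in $W^{k,2}(\R^2;\R^2)$ is routine. For these approximations, $W^{k,2}$ energy estimates based on the Kato--Ponce commutator inequality yield
\begin{equation*}
\frac{d}{dt} \|\vectorv_h\|_{W^{k,2}}^2 \leq C \|\nabla_{x_h} \vectorv_h\|_{L^\infty} \|\vectorv_h\|_{W^{k,2}}^2.
\end{equation*}
Recovering $\vectorv_h$ from $\omega$ by Biot--Savart and invoking the logarithmic Sobolev inequality of Beale--Kato--Majda type, $\|\nabla_{x_h} \vectorv_h\|_{L^\infty}$ is controlled by $C(1 + \|\omega\|_{L^2} + \|\omega\|_{L^\infty}(1 + \log(1+\|\vectorv_h\|_{W^{k,2}})))$. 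Combined with the conservation of $\|\omega\|_{L^p}$, a Gronwall argument yields a (double-exponential but) finite bound on $\|\vectorv_h\|_{W^{k,2}}$ on any finite interval $[0,T]$.

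Extracting a weak-$\star$ limit and upgrading to strong compactness in lower Sobolev norms produces a solution with $\vectorv_h \in C([0,T]; W^{k,2}(\R^2;\R^2))$; reading $\partial_t \vectorv_h$ off the equation then gives $\partial_t \vectorv_h \in C([0,T]; W^{k-1,2})$. The pressure is recovered by taking $\Divh$ of the momentum equation, producing the Poisson equation $-\Delta_{x_h} \Pi = \partial_i \partial_j (v_i v_j)$ in $\R^2$; Calder\'on--Zygmund theory together with the algebra property of $W^{k,2}(\R^2)$ for $k \geq 3$ delivers $\Pi \in C([0,T]; W^{k,2}(\R^2))$. For uniqueness I set $\mathbf{w} = \vectorv_h^1 - \vectorv_h^2$, write the $L^2$ identity for $\mathbf{w}$, observe that the pressure drops out after integration by parts and the quadratic transport terms collapse to a single term bounded by $\|\nabla_{x_h}\vectorv_h^2\|_{L^\infty} \|\mathbf{w}\|_{L^2}^2$, and conclude with Gronwall.

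The main technical obstacle I anticipate is the careful handling on the unbounded domain $\R^2$: the Biot--Savart kernel is only logarithmic, so recovering $\vectorv_h$ from $\omega$ requires appropriate decay or a normalisation of the velocity at infinity, and one must ensure that the approximate solutions decay sufficiently far out to legitimise the integrations by parts in the Kato--Ponce estimates. This is exactly the content that Kato and Lai settle in \cite{KL1984}.
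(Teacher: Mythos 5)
The paper offers no proof of this proposition at all: it is quoted verbatim as a known result and attributed to Kato and Lai \cite{KL1984}, so there is no argument in the text to match yours against. Your sketch is the standard modern energy-method proof (vorticity transport, Kato--Ponce commutator estimates, a Beale--Kato--Majda-type logarithmic bound on $\|\nabla_{x_h}\vectorv_h\|_{L^\infty}$, Calder\'on--Zygmund for the pressure, $L^2$ uniqueness), and in outline it is sound; it is, however, a genuinely different route from the cited reference, which obtains existence, uniqueness and the stated continuity in time from an abstract well-posedness theorem for nonlinear evolution equations in a pair of Banach spaces, precisely so as to handle the unbounded domain and the time-continuity in the top norm without a separate compactness argument. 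Two small points in your version deserve care: conservation of $\|\omega\|_{L^p}$ holds only for those $p$ with $\omega_0\in L^p$, and here $\omega_0\in W^{k-1,2}(\R^2)$ only guarantees $p\in[2,\infty]$ (this is all you use, but the claim ``for every $p\in[1,\infty]$'' is not available); and extracting a weak-$\star$ limit gives a priori only $L^\infty(0,T;W^{k,2})$ with continuity in weaker norms, so continuity of $\vectorv_h$ into $W^{k,2}$ itself requires the usual extra step (weak continuity plus continuity of the norm, or regularisation of the data). Also, your anticipated obstacle with the Biot--Savart kernel is milder than you fear: you never need to reconstruct $\vectorv_h$ from $\omega$, since $\|\vectorv_h\|_{L^2}$ is conserved and $\nabla_{x_h}\vectorv_h$ is obtained from $\omega$ by a Calder\'on--Zygmund operator, which is exactly what enters the logarithmic inequality; this is one of the simplifications that the abstract framework of \cite{KL1984} also exploits.
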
 
Taking motivation from \eqref{case3-central} we consider another equation that describes some non-oscillatory part, 
\begin{align}\label{non_osc}
\begin{split}
&\nabla_x q_{\epsilon} +  \epsilon^{m-1 }\mathbf{b} \times \vectorv =0.\\
\end{split}
\end{align}
Also we choose $ q_\epsilon (0,\cdot)= q_{0,\epsilon} $ such that $-\Delta_{x_h} q_{0,\epsilon} = \epsilon^{m-1}\bar{\varrho}  \curlh P_h(\vectorv_0). $

\subsection{ Definition of "Well-prepared Data"}
\begin{Def}
	We say that the set of initial data $\{(\varrho_{0,\epsilon},\vectorm_{0,\epsilon})\}_{\epsilon>0} $ is "well-prepared" if,
	\begin{align}\label{well_prepared_id}
	\begin{split}
	&\varrho_\epsilon(0,\cdot)= \varrho_{0,{\epsilon}}= \tilde{\varrho_\epsilon} + \epsilon^m {\varrho}_{0,\epsilon}^{(1)} \text{ with } {\varrho}_{0,\epsilon}^{(1)} \rightarrow 0 \text{ in } L^2(\Om),\\
	&\vectoru_{0,\epsilon}= \frac{\vectorm_{0,\epsilon}}{\varrho_{0,{\epsilon}}} \rightarrow \vectorv_{0}=\big(\vectorv_{0}^{(1)},\vectorv_{0}^{(2)},0\big ) \text{ in } L^2(\Om;\R^3) \text{ with } \Divh \vectorv_{0}=0.
	\end{split}
	\end{align}

\end{Def}

\subsection{Main Theorem:}

\begin{Th}\label{th1}
	Let pressure $p$ follows \eqref{p-condition}. We assume that the initial data is \emph{well-prepared}, i.e. it follows \eqref{well_prepared_id}. Let $ \vectorv_0\in W^{k,2}(\Om)$ with $k\geq 3$. Let $(\varrho_{\epsilon}, \vectoru_{\epsilon}) $ be a dissipative solution as in definition \eqref{dmv_defn} in $(0,T)\times \Om$.  Then,
	\begin{align}
	\begin{split}
	\text{ess}\sup_{t\in (0,T)} \Vert \varrho_{\epsilon} - \tilde{\varrho}_\epsilon \Vert_{(L^2+L^\gamma)(\Om)} \leq \epsilon^m c\\
	\frac{{\vectorm_\epsilon}}{\sqrt{\varrho_{\epsilon}}}\rightarrow \vectorv \begin{cases}
	 \text{weakly(*) in} L^{\infty}(0,T;L^2(\Om;\R^3)),\\
	\text{ strongly in } L^1_{\text{loc}}((0,T)\times \Om;\R^3),
	\end{cases}
	\end{split}
	\end{align}
	where, $\vectorv=(\vectorv_h,0)$ is the unique solution of Euler system with initial data $\vectorv_{0}$.
\end{Th}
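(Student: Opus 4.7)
The strategy is a relative (modulated) energy argument tailored to the three simultaneous singular scales. Introduce
\begin{align*}
\mathcal{E}_\epsilon(\tau)= \int_\Om \bigg[\frac{1}{2}\varrho_\epsilon |\vectoru_\epsilon - \vectorU|^2 + \frac{1}{\epsilon^{2m}}\Big(P(\varrho_\epsilon) - P'(\tilde\varrho_\epsilon)(\varrho_\epsilon - \tilde\varrho_\epsilon) - P(\tilde\varrho_\epsilon)\Big)\bigg](\tau,\cdot)\dx +\int_{\overline{\Om}} \mathrm{d}\mathfrak{R}_{e_\epsilon}(\tau,\cdot),
\end{align*}
where $\vectorU=(\vectorv_h,0)$ is the lift of the unique smooth 2D Euler solution $\vectorv_h$. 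The goal is to close a Gronwall inequality of the form $\mathcal{E}_\epsilon(\tau)\le o_\epsilon(1)+C\int_0^\tau \mathcal{E}_\epsilon(s)\dt$, from which the stated convergences follow by coercivity of the relative pressure functional and the kinetic part of $\mathcal{E}_\epsilon$.

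The derivation of the relative energy inequality combines the energy inequality \eqref{energy_inequality} with the weak continuity equation \eqref{continuity_eqn_diss}, tested against $\tfrac{1}{2}|\vectorU|^2 - \epsilon^{-2m}P'(\tilde\varrho_\epsilon)+\epsilon^{-2n}G$, and the weak momentum equation \eqref{momentum_eqn_diss}, tested against $\vectorU$. Since these test fields are not compactly supported, I would truncate by a horizontal cut-off $\chi_R(|x_h|/R)$, invoke the $L^2+L^\gamma$ integrability from \eqref{fe_id}--\eqref{far_field} to pass $R\to\infty$, and exploit the static-state identity \eqref{static-solution} to cancel the singular gravitational forcing against the static part of the pressure up to an $O(\epsilon^{2(m-n)})$ remainder controlled by \eqref{staic:soln:final}. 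The defect contribution $\int\nabla_x\vectorU\!:\!\mathrm{d}\mathfrak{R}_{m_\epsilon}$ is absorbed into $\int\mathrm{d}\mathfrak{R}_{e_\epsilon}$ via the compatibility relation \eqref{compat_turb_mes}, paying a factor $\|\nabla_x\vectorU\|_{L^\infty}$ that is finite by Sobolev embedding on $\vectorv_h\in W^{k,2}$ with $k\ge 3$.

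Handling the singular rotation term $\epsilon^{-1}\int \mathbf{b}\times\vectorm_\epsilon\cdot\vectorU\dx$ is the crux. Using the non-oscillatory corrector $q_\epsilon$ from \eqref{non_osc}, we write $\mathbf{b}\times\vectorU=-\epsilon^{1-m}\nabla_x q_\epsilon$, which turns this integral into $\epsilon^{-m}\int \vectorm_\epsilon\cdot\nabla_x q_\epsilon\dx$; combined with the continuity equation this becomes a time derivative of $\epsilon^{-m}\int(\varrho_\epsilon-\tilde\varrho_\epsilon)q_\epsilon\dx$ plus remainders of lower order in $\epsilon$. The $\epsilon^{-2m}$ pressure integrals then reassemble to produce the time derivative of the modulated pressure potential, completing the compensation. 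Because $\vectorv_h$ solves \eqref{target_sys_2D euler}, the material derivative $\partial_t\vectorU+(\vectorU\cdot\nabla_x)\vectorU$ equals $-(\nabla_{x_h}\Pi,0)$; its pairing against $\varrho_\epsilon(\vectoru_\epsilon-\vectorU)$ is eliminated by integration by parts and the near-incompressibility $\Div\vectorm_\epsilon = -\partial_t(\varrho_\epsilon-\tilde\varrho_\epsilon)$ together with the smallness of $\varrho_\epsilon-\tilde\varrho_\epsilon$ inherited from $\mathcal{E}_\epsilon$ itself.

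After these manipulations the residual reads $C\mathcal{E}_\epsilon(s)+o_\epsilon(1)$, with the quadratic piece bounded via $\|\nabla_x\vectorU\|_{L^\infty}\mathcal{E}_\epsilon$ and $o_\epsilon(1)$ absorbing (i) the initial-data error $\mathcal{E}_\epsilon(0)\to 0$ forced by \eqref{well_prepared_id}, (ii) the static-profile errors $O(\epsilon^{2(m-n)})$ from \eqref{staic:soln:final}, and (iii) the $R\to\infty$ truncation residuals. Gronwall then yields $\text{ess}\sup_{\tau\in(0,T)}\mathcal{E}_\epsilon(\tau)\to 0$ at a rate controlled by $\epsilon^{\min(m,2(m-n))}$; the bound on $\varrho_\epsilon-\tilde\varrho_\epsilon$ in $L^2+L^\gamma$ follows from coercivity of the relative pressure functional, the weak-$\ast$ convergence of $\vectorm_\epsilon/\sqrt{\varrho_\epsilon}$ from the kinetic part, and local strong $L^1$ convergence by pairing the $L^\infty_tL^2_x$ bound with the equicontinuity in time inherited from the momentum equation. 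I expect the main obstacle to be the interaction between the unbounded horizontal domain and the non-decaying corrector $q_\epsilon$: both the justification of the relative energy identity for non-compactly-supported test fields and the control of $q_\epsilon$ at horizontal infinity will require careful double cut-off arguments that must be threaded compatibly with the $\epsilon\to 0$ limit.
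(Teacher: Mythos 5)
Your proposal follows essentially the same route as the paper's proof: a relative (modulated) energy argument with the test pair consisting of the static profile corrected by $q_\epsilon$ from \eqref{non_osc} and the lifted 2D Euler solution $(\vectorv_h,0)$, the singular Coriolis term neutralized through $q_\epsilon$ and the bounds \eqref{q-epsilon est}, the defect measures absorbed via the compatibility condition \eqref{compat_turb_mes}, the well-prepared data \eqref{well_prepared_id} forcing the initial relative energy to vanish, and Gr\"onwall plus coercivity giving the uniform density estimate and the weak-$*$ and local strong convergence of $\vectorm_\epsilon/\sqrt{\varrho_\epsilon}$. The only deviations (treating the rotation term through the continuity equation rather than by modulating the density test function $\tilde{\varrho}=\tilde{\varrho}_\epsilon+\epsilon^m q_\epsilon$, and horizontal cut-offs instead of the density-of-smooth-functions argument of Basari\'c for the unbounded slab) are bookkeeping-level and do not alter the argument.
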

In the following subsections we give the proof.
\subsection{Relative energy inequality}
In our approach, relative energy functional plays an important role. We consider,
\begin{align}\label{rel-ent-epsilon}
\begin{split}
\Epsilon_{\epsilon}(t)&=\Epsilon(\varrho_{\epsilon},\vectorm_{\epsilon} \vert \tilde{\varrho},\tilde{\vectoru})(t)\\
&:= \int_{\Omega}\big[ \frac{1	}{2} \varrho_{\epsilon} \bigg\vert \frac{\vectorm_\epsilon}{\varrho_{\epsilon}}-\tilde{\vectoru}\bigg\vert^2 + \frac{1}{\epsilon^{2m}}(P(\varrho_{\epsilon})-P(\tilde{\varrho}) -P^{\prime}(\tilde{\varrho})(\varrho_{\epsilon} -\tilde{\varrho})) \big] (t,\cdot) \dx ,
\end{split}
\end{align}
where $\tilde{\varrho},\tilde{\vectoru}$ are arbitrary smooth test functions with $\tilde{\varrho}-\tilde{\varrho}_\epsilon$ have compact support and  $\tilde{\vectoru}\cdot \mathbf{n}=0$ on $\partial\Om$.
\begin{Rem}
	The relative energy is a coercive functional (see. Bruell et. al. \cite{BrF2018}) satisfying the estimate,
	\begin{align}
	\begin{split}
	\Epsilon(\varrho_\epsilon,\vectoru_\epsilon \vert \tilde{\varrho},\tilde{\vectoru})(t) \geq& \int_{ \Om} \bigg[  \bigg\vert \frac{{\vectorm_\epsilon}}{\varrho_{\epsilon}}-\tilde{\vectorm}\bigg\vert^2  \bigg]_{\text{ess}} \dx + \int_{ \Om} \bigg[ \frac{ {\vert \vectorm_\epsilon \vert^2}} {\varrho_{\epsilon}} \bigg]_{\text{res}} \dx \\
	\quad & + \frac{1}{\epsilon^{2m}}\int_{ \Om} [(\varrho_\epsilon-\tilde{\varrho})^2]_{\text{ess}}\dx + \frac{1}{\epsilon^{2m}}\int_{ \Om} [1]_{\text{res}} + [\varrho_\epsilon^{\gamma}]_{\text{res}}\dx.
	\end{split}
	\end{align}
\end{Rem}
Using definition \eqref{dmv_defn} we obtain relative energy inequality,
\begin{align}\label{rel-ent-m}
\begin{split}
&\Epsilon_{\epsilon}(\tau)+   \int_{\overline{\Omega}}  {\rm d \; } \mathfrak{R}_{e_\epsilon} (\tau, \cdot)\\
\; &\leq \Epsilon_{\epsilon}(0) -\int_0^{\tau}\int_{\Om}  (\vectorm_{\epsilon}-\varrho_{\epsilon} \tilde{\vectoru})\cdot \partial_{t} \tilde{\vectoru}\dx \dt - \int_0^{\tau}\int_{\Om}\bigg(\frac{ (\vectorm_{\epsilon}-\varrho_{\epsilon} \tilde{\vectoru}) \otimes \vectorm_{\epsilon}}{\varrho_{\epsilon}}\bigg) : \nabla_x \tilde{\vectoru} \dx \dt  \\ 
&\;  - \frac{1}{\epsilon^{2m}} \int_0^{\tau}\int_{\Om} (p(\varrho_{\epsilon})-p(\tilde{\varrho})) \Div \tilde{\vectoru} \dx \dt +\frac{1}{\epsilon^{2m}} \int_0^{\tau}\int_{\Om} (\tilde{\varrho}-\varrho_{\epsilon}) \partial_{t} P^{\prime}(\tilde{\varrho}) \dx\dt \\
& \; +\frac{1}{\epsilon} \int_0^{\tau}\int_{\Om}  \mathbf{b} \times \mathbf{m}_{\epsilon} \cdot \bigg( \tilde{\vectoru}- \frac{{\vectorm_{\epsilon}}}{\varrho_{\epsilon}}\bigg)  \dx \dt \\ 
&\; + \frac{1}{\epsilon^{2m}} \int_0^{\tau}\int_{\Om}(\tilde{\varrho}\tilde{\vectoru}- \vectorm_{\epsilon}) \cdot ( \nabla_x P^{\prime}(\tilde{\varrho})-\nabla_x P^{\prime}(\tilde{\varrho_\epsilon})) \dx \dt \\
&\;  - \frac{1}{\epsilon^{2n}} \int_{0}^{\tau} \int_{ \Om} (\varrho_{\epsilon}- \tilde{\varrho}) \nabla_{x}G\cdot \tilde{\vectoru}\dx \dt- \int_0^{\tau}\int_{\Om} \nabla_x \tilde{\vectoru} : \text{d}\mathfrak{R}_{m_\epsilon}   (t,\cdot)\dt . \\
\end{split}
\end{align}

Following \cite{Bd2019} we can extend the above inequality for functions $(\tilde{\varrho},\tilde{\vectoru})$ having Sobolev regularities, i.e. $(\tilde{\varrho}-\tilde{\varrho}_\epsilon, \tilde{\vectoru})\in C^1([0,T]; W^{k,2}_{0}(\Om))\times C^1([0,T]; W^{k,2}(\Om;\R^3))$  with $k\geq 3$.

We rewrite the above inequality as,
\begin{align}
\begin{split}
\big[\Epsilon_{\epsilon}(\varrho_{\epsilon},\vectorm_{\epsilon}| \tilde{\varrho};\tilde{\vectoru})\big]_{0}^{\tau }+\int_{\overline{\Omega}}  {\rm d \; } \mathfrak{R}_{e_\epsilon} (\tau, \cdot)+ \big[\mathcal{R}_{\epsilon}(\varrho_{\epsilon},\vectorm_{\epsilon}| \tilde{\varrho};\tilde{\vectoru})\big] \leq 0.
\end{split}
\end{align}

Suppose, $(\tilde{\varrho}-\tilde{\varrho}_\epsilon, \tilde{\vectoru})\in C^1([0,T]; W^{k,2}_{0}(\Om))\times C^1([0,T]; W^{k,2}(\Om;\R^3))$  with $\tilde{\vectoru}\cdot \mathbf{n}|_{\partial\Om}=0$ and $k\geq3$. We know $C^1([0,T];C_{c}^{\infty}({ \Om}))$ is dense in Sobolev space $C^1([0,T]; W^{k,2}(\Om))$ and $\{\vectoru\in C^1([0,T];C^{\infty}(\overline{ \Om};\R^3))|\vectoru \cdot \mathbf{n}|_{\partial\Om=0} \}$ is dense in $\{\vectoru\in C^1([0,T];W^{k,2}({ \Om};\R^3))|\vectoru \cdot \mathbf{n}|_{\partial\Om=0} \}$. \\
For $\delta>0$ we have, $\tilde{r} \in C^1([0,T];C_{c}^{\infty}(\Om))$ and $\tilde{\vectorv} \in C^1([0,T];C^{\infty}(\overline{ \Om}))$ with $\tilde{\vectorv}\cdot \mathbf{n}|_{\partial\Om}=0$.
\begin{align*}
\Vert \tilde{r} - \tilde{\varrho} \Vert_{ C^1([0,T]; W^{k,2}(\Om))} + \Vert \tilde{\vectorv} - \tilde{\vectoru} \Vert_{ C^1([0,T]; W^{k,2}(\Om))} < \delta.
\end{align*}
Following Theorem 2.3 of \cite{Bd2019} we can show that,
\begin{align}
\begin{split}
& \big[\Epsilon_{\epsilon}(\varrho_{\epsilon},\vectorm_{\epsilon}| \tilde{\varrho};\tilde{\vectoru})\big]_{0}^{\tau } +\int_{\overline{\Omega}}  {\rm d \; } \mathfrak{R}_{e_\epsilon} (\tau, \cdot)+ \big[\mathcal{R}_{\epsilon}(\varrho_{\epsilon},\vectorm_{\epsilon}| \tilde{\varrho};\tilde{\vectoru})\big]\\
&\leq \big[\Epsilon_{\epsilon}(\varrho_{\epsilon},\vectorm_{\epsilon}| \tilde{r};\tilde{\vectorv})\big]_{0}^{\tau } +\int_{\overline{\Omega}}  {\rm d \; } \mathfrak{R}_{e_\epsilon} (\tau, \cdot)+\big[\mathcal{R}_{\epsilon}(\varrho_{\epsilon},\vectorm_{\epsilon}| \tilde{r};\tilde{\vectorv})\big] + C \delta \\
& \leq C\delta.\\
\end{split}
\end{align}
Thus for Sobolev functions, relative energy inequality \eqref{rel-ent-m} is true.

\subsection{Convergence: Part 1}First with $\tilde{\vectoru}= 0,\; \tilde{\varrho}=\tilde{\varrho}_\epsilon$ as test functions we have the following bounds,
\begin{align}\label{bound1}
\begin{split}
&\text{ess}\sup_{t\in (0,T)} \bigg\Vert \frac{\vectorm_{\epsilon}}{\sqrt{\varrho_{\epsilon}}} \bigg\Vert_{L^2(\Om;\R^3)} \leq C,\\
&\text{ess}\sup_{t\in (0,T)} \bigg\Vert \bigg[\frac{\varrho_{\epsilon}-\bar{\varrho}}{\epsilon^{m}}\bigg]_{\text{ess}}\bigg\Vert_{L^2(\Om)} \leq C,\\
&\text{ess}\sup_{t\in (0,T)} \Vert [\varrho_{\epsilon}]_{\text{res}} \Vert_{L^\gamma(\Om)}^{\gamma}  + \text{ess}\sup_{t\in (0,T)} \Vert [1]_{\text{res}} \Vert_{L^\gamma(\Om)}^{\gamma} \leq \epsilon^{2m} C .
\end{split}
\end{align}
Now we want to calculate $ \Vert \varrho_{\epsilon}- \tilde{\varrho}_\epsilon \Vert_{(L^2+L^\gamma)(\Om)}$. 
We rewrite,
\begin{align*}
 \Vert \varrho_{\epsilon}- \tilde{\varrho}_\epsilon \Vert_{(L^2+L^\gamma)(\Om)} \leq  \Vert [\varrho_{\epsilon}- \tilde{\varrho}_\epsilon]_{\text{ess}} \Vert_{L^2(\Om)} +  \Vert [\varrho_{\epsilon}- \tilde{\varrho}_\epsilon]_{\text{rss}} \Vert_{L^\gamma(\Om)}.
\end{align*}
From the above estimates and using fact $m>>\gamma$ we have,
\begin{align*}
\text{ess}\sup_{t\in (0,T)} \Vert \varrho_{\epsilon}- \tilde{\varrho}_\epsilon \Vert_{(L^2+L^\gamma)(\Om)} \leq \epsilon^m C.
\end{align*}
Now above estimates conclude that,
\begin{align}
\varrho_\epsilon \rightarrow 1 \text{ in } L^{\infty}(0,T;L^q_{\text{loc}}(\Om)) \text{ for any } 1\leq q <\gamma.
\end{align}
Combining above estimates we obtain,
\begin{align*}
& \frac{\vectorm_{\epsilon}}{\sqrt{\varrho_{\epsilon}}} \rightarrow \vectoru \text{ in } L^{\infty}(0,T;L^2(\Om;\R^3)),
\end{align*}
and
\begin{align*}
& \vectorm_{\epsilon}\rightarrow \vectoru \text{ weakly-}(*) \text{ in } {L^{\infty}(0,T;L^2+L^{\sfrac{2\gamma}{\gamma+1}}(\Om;\R^3))},
\end{align*}
passing to suitable subsequences.\\
Finally we may let $\epsilon \rightarrow 0$ in the continuity equation to deduce that,
\begin{align*}
\int_0^{\tau} \int_{\Om}  \vectoru \cdot \nabla_x \varphi \dx \dt=0,\; \forall \varphi \in C_{c}^{\infty}(\Om).
\end{align*}

\subsection{Convergence: Part 2}
Here we choose proper test functions and will show that $\lim\limits_{\epsilon\rightarrow 0} \Epsilon_{\epsilon}(t)=0$.\\
We consider,
\begin{align}
\tilde{\vectoru}= \vectorv,\; \tilde{\varrho}=\tilde{\varrho}_\epsilon+ \epsilon^m q_\epsilon,
\end{align}
where, $\vectorv=(\vectorv_h,0)$, $\vectorv_h$ as a solution of \eqref{target_sys_2D euler}, $q_\epsilon$ solves \eqref{non_osc} and $\tilde{\varrho}_\epsilon$ satisfies \eqref{static-solution}.
Using relation of $q_\epsilon$ and $\vectorv$ we obtain,
\begin{align}\label{rel_ent_m_2}
\begin{split}
&\Epsilon_{\epsilon}(\tau)+  \int_{\overline{\Omega}}  {\rm d \; } \mathfrak{R}_{e_\epsilon} (\tau, \cdot)\\
\; &\leq \Epsilon_{\epsilon}(0) -\int_0^{\tau}\int_{\Om}  (\vectorm_{\epsilon}-\varrho_{\epsilon} \vectorv)\cdot (\partial_{t}  \vectorv + ( \vectorv\cdot\nabla_{x}) \vectorv)\dx \dt \\
& - \int_0^{\tau}\int_{\Om}\bigg(\frac{ (\vectorm_{\epsilon}-\varrho_{\epsilon} {\vectorv}) \otimes (\vectorm_{\epsilon}-\varrho_{\epsilon} \vectorv)}{\varrho_{\epsilon}}\bigg) : \nabla_x \tilde{\vectoru} \dx \dt  \\ 
&\;   +\frac{1}{\epsilon^{2m}} \int_0^{\tau}\int_{\Om} (\tilde{\varrho}-\varrho_{\epsilon}) \partial_{t} P^{\prime}(\tilde{\varrho}) \dx\dt \\
& \; +\frac{1}{\epsilon^m} \int_0^{\tau}\int_{\Om}  \vectorm_{\epsilon} \cdot \nabla_{x}q_\epsilon (P^{\prime\prime}(\tilde{\varrho})-P^{\prime\prime}(1))  \dx \dt \\
&\;  + \frac{1}{\epsilon^{2m}} \int_0^{\tau}\int_{\Om} \vectorm_{\epsilon} \cdot  (P^{\prime\prime}(\tilde{\varrho})-P^{\prime\prime}(\tilde{\varrho_\epsilon})) \nabla_x \tilde{\varrho}_\epsilon \dx \dt \\
&\;  - \frac{1}{\epsilon^{2n}} \int_{0}^{\tau} \int_{ \Om} (\varrho_{\epsilon}- \tilde{\varrho}) \nabla_{x}G\cdot \tilde{\vectoru}\dx \dt- \int_0^{\tau}\int_{\Om} \nabla_x \tilde{\vectoru} : \text{d}\mathfrak{R}_{m_\epsilon}   (t,\cdot)\dt = \Sigma_{i=1}^{8} \mathcal{L}_{i}. \\
\end{split}
\end{align}

From the relation $
\nabla_{x} q_\epsilon + \epsilon^{m-1} \mathbf{b} \times (\vectorv_h,0)=0,$
and \eqref{reg_tar_system} we can conclude that
\begin{align}\label{q-epsilon est}
\Vert q_\epsilon\Vert_{L^{\infty}(0,T;L^q(\Om))} + \Vert \partial_{t} q_\epsilon \Vert_{L^{\infty}(0,T;L^q(\Om))} + \Vert  \nabla_{x_h}q_\epsilon \Vert_{L^{\infty}(0,T;L^q(\Om))} \leq \epsilon^{m-1}c,\; \forall q\geq 2.
\end{align}
Also we have $\Vert q_{0,\epsilon} \Vert_{L^{\infty}(0,T;L^2(\Om))} \leq \epsilon^{m-1}c$.\\
Let us calculate each term $\mathcal{L}_i$, $ i=1(1)8 $ of \eqref{rel_ent_m_2}. For term $\mathcal{L}_1$ we have,
\begin{align*}
\Epsilon_{\epsilon}(\varrho_{0,\epsilon}, (\varrho\vectoru)_{0,\epsilon} \;|\; \tilde{\varrho}_\epsilon+ \epsilon q_{0,\epsilon}, \vectorv_0) \leq \Vert \frac{(\varrho\vectoru)_{0,\epsilon}}{\varrho_{0,\epsilon}}-  \vectorv_{0} \Vert_{L^2(\Om)}^2+ \Vert \varrho_{0,\epsilon}^{(1)}-q_{0,\epsilon} \Vert^2_{L^2(\Om)}.
\end{align*}
Consideration of \emph{well prepared data} yields,
\begin{align}\label{estL1}
\vert \mathcal{L}_1 \vert \leq \xi(\epsilon).
\end{align}
From now on we use this generic function $\xi(\cdot)$, such that $\lim\limits_{\epsilon\rightarrow 0}\xi(\epsilon)=0$.\\
Now using our convergence in earlier part and $\vectorv_h$ solves \eqref{target_sys_2D euler} we obtain,
\begin{align*}
\mathcal{L}_2=& -\int_0^{\tau}\int_{\Om}  (\vectorm_{\epsilon}-\varrho_{\epsilon} \vectorv)\cdot (\partial_{t}  \vectorv + ( \vectorv\cdot\nabla_{x}) \vectorv)\dx \dt\\
&=-\int_0^{\tau}\int_{\Om}  (\vectorm_{\epsilon}- \vectorv)\cdot \nabla_{x_h} \Pi\dx \dt + -\int_0^{\tau}\int_{\Om}  (1-\varrho_{\epsilon})\vectorv\cdot \nabla_{x_h}\Pi \dx \dt\\
&= \mathcal{L}_{2\prime} + \mathcal{L}_{2\prime\prime}.
\end{align*}
It is easy to verify that,
\begin{align*}
\mathcal{L}_{2\prime} \xrightarrow{{\epsilon \rightarrow 0}} -\int_{0}^{\tau} \int_{ \Om} (\vectoru-\vectorv)\nabla_{x_h} \Pi\dx \dt=0.
\end{align*}
We also obtain,
$\vert \mathcal{L}_{2\prime\prime} \vert \leq \xi(\epsilon).$
Thus we can conclude, 
\begin{align}
\vert \mathcal{L}_2 \vert \leq \xi(\epsilon).
\end{align}
Now we also obtain,
\begin{align}
\vert \mathcal{L}_3 \vert \leq \Vert \nabla_{x_h} \vectorv_h \Vert_{L^{\infty}(0,\tau;L^{\infty}(\Om))} \int_{0}^{\tau} \Epsilon_{\epsilon}(t)\dt.
\end{align}
We want to estimate the term $\mathcal{L}_4$. First we rewrite it as,
\begin{align*}
\mathcal{L}_4&=\frac{1}{\epsilon^{m}} \int_0^{\tau}\int_{\Om} (\tilde{\varrho}-\varrho_{\epsilon}) P^{\prime\prime}(\tilde{\varrho}) \partial_{t} q_\epsilon  \dx\dt\\
&=\int_{0}^{\tau} \int_{ \Om} \frac{\tilde{\varrho}_\epsilon- \varrho_{\epsilon}}{\epsilon^m} \partial_{t} q_\epsilon P^{\prime\prime}(\tilde{\varrho})\dx \dt + \int_{0}^{\tau} \int_{ \Om} q_\epsilon \partial_{t} q_\epsilon P^{\prime\prime}(\tilde{\varrho}) \dx \dt. 
\end{align*}
We observe that,
\begin{align*}
\vert \mathcal{L}_4 \vert \leq &   \sup_{t\in (0,T)} \bigg\Vert \bigg[\frac{\varrho_{\epsilon}-\bar{\varrho}}{\epsilon^{m}}\bigg]_{\text{ess}}\bigg\Vert_{L^2(\Om)} \Vert \partial_t q_\epsilon \Vert_{L^{\infty}(0,T;L^2(\Om))}\\
&+ \sup_{t\in (0,T)} \bigg\Vert \bigg[\frac{\varrho_{\epsilon}-\bar{\varrho}}{\epsilon^{m}}\bigg]_{\text{res}}\bigg\Vert_{L^\gamma(\Om)} \Vert \partial_t q_\epsilon \Vert_{L^{\infty}(0,T;L^{\gamma\,'}(\Om))}\\
&+ \Vert \partial_t q_\epsilon \Vert_{L^{\infty}(0,T;L^2(\Om))} \Vert q_\epsilon \Vert_{L^{\infty}(0,T;L^2(\Om))}.
\end{align*}
Using bound of $q_\epsilon$ as in \eqref{q-epsilon est} and \eqref{bound1}, we obtain,
\begin{align}
\vert \mathcal{L}_4 \vert \leq \xi(\epsilon).
\end{align}
We write $\mathcal{L}_5$ as,
\begin{align*}
\mathcal{L}_5 &= \frac{1}{\epsilon^m} \int_{0}^\tau \int_{ \Om} \vectorm_{\epsilon} \cdot \nabla_{x} q_\epsilon (\tilde{\varrho}-1) P^{\prime\prime\prime}(\eta(x)) \dx \dt\\
&=\frac{1}{\epsilon^m} \int_{0}^\tau \int_{ \Om}\vectorm_{\epsilon} \cdot \nabla_{x} q_\epsilon (\tilde{\varrho}_\epsilon-1) P^{\prime\prime\prime}(\eta(x)) \dx \dt \\
&\quad\;\;+  \int_{0}^\tau \int_{ \Om} \vectorm_{\epsilon} \cdot \nabla_{x}  q_\epsilon q_\epsilon P^{\prime\prime\prime}(\eta(x)) \dx \dt.
\end{align*}
By using \eqref{bound1} and \eqref{q-epsilon est} we observe,
\begin{align}
\begin{split}
\vert \mathcal{L}_5 \vert &\leq  \epsilon^{m-2n} \Vert \vectorm_{\epsilon} \Vert_{L^{\infty}(0,T;L^2+L^{\sfrac{2\gamma}{\gamma+1}}(\Om;\R^3))} \Vert \nabla_{x} q_\epsilon \Vert_{L^{\infty}(0,T;L^2\cap L^{(\sfrac{2\gamma}{\gamma+1})^\prime}(\Om;\R^3))}\\
&\quad  + \Vert \vectorm_{\epsilon} \Vert_{L^{\infty}(0,T;L^2+L^{\sfrac{2\gamma}{\gamma+1}}(\Om;\R^3))}  \Vert q_\epsilon \nabla_{x} q_\epsilon \Vert_{L^{\infty}(0,T;L^2\cap L^{(\sfrac{2\gamma}{\gamma+1})^\prime}(\Om;\R^3))} \\
&\leq \xi(\epsilon).
\end{split}
\end{align}
Similarly for $\mathcal{L}_6$ we rewrite as,
\begin{align*}
\mathcal{L}_6 &= \frac{1}{\epsilon^{2m}} \int_{0}^\tau \int_{ \Om}  \vectorm_{\epsilon} \cdot (\tilde{\varrho} - \tilde{\varrho}_\epsilon) P^{\prime\prime\prime}(\zeta(x)) \nabla_{x} \tilde{\varrho}_\epsilon \dx \dt\\
& =\frac{1}{\epsilon^{m}} \int_{0}^\tau \int_{ \Om}  \vectorm_{\epsilon} \cdot  q_\epsilon P^{\prime\prime\prime}(\zeta(x)) \nabla_{x} \tilde{\varrho}_\epsilon \dx \dt.
\end{align*}
Arguing in the same line as before we have,
\begin{align}
\begin{split}
\vert \mathcal{L}_6 \vert &\leq \epsilon^{m-2n}  \Vert \vectorm_{\epsilon} \Vert_{L^{\infty}(0,T;L^2+L^{\sfrac{2\gamma}{\gamma+1}}(\Om;\R^3))} \Vert q_\epsilon  \Vert_{L^{\infty}(0,T;L^2\cap L^{(\sfrac{2\gamma}{\gamma+1})^\prime}(\Om))} \\
& \leq \xi(\epsilon) .
\end{split}
\end{align}
Now choice of $G$ implies
\begin{align}
\mathcal{L}_7 =0.
\end{align}   We also have,
\begin{align}\label{estL8}
\vert \mathcal{L}_8 \vert \leq \int_{0}^{\tau} \int_{\overline{ \Om}}\mathfrak{R}_{e_\epsilon} \dt .
\end{align}
 Thus combining all estimates \eqref{estL1}-\eqref{estL8} we have,
 \begin{align}
 \begin{split}
 \Epsilon_{\epsilon}(\tau)+ \int_{\overline{\Omega}}  {\rm d \; } \mathfrak{R}_{e_\epsilon} (\tau, \cdot) \leq \xi(\epsilon)+  c \int_{0}^{\tau} \Epsilon_{\epsilon} (t) \dt + c \int_{0}^{\tau} \int_{\overline{ \Om}}\text{d}\mathfrak{R}_{e_\epsilon}  \dt .
 \end{split}
 \end{align}
 Using Gr{\"o}nwall lemma we have,
 \begin{align}
 \begin{split}
 \Epsilon_{\epsilon}(\tau)+ \int_{\overline{\Omega}}  {\rm d \; } \mathfrak{R}_{e_\epsilon} (\tau, \cdot) \leq \xi(\epsilon) C(T),
 \end{split}
 \end{align}
 where $\xi(\epsilon)\rightarrow 0$ as $\epsilon \rightarrow 0$.
Using coercivity of relative energy functional we obtain,
\begin{align*}
\limsup_{\epsilon\rightarrow 0} \int_{K} \vert \frac{{\vectorm_\epsilon}}{\sqrt{\varrho_{\epsilon}}}-\vectorv \vert^2 \dx \leq C(T) \limsup_{\epsilon\rightarrow  0} \xi(\epsilon),
\end{align*}
where, $K\subset \Om$ is a compact set.
Hence we can conclude that $\vectorv=\vectoru=\vectorv_h$. Also we have,
\begin{align*}
\frac{{\vectorm_\epsilon}}{\sqrt{\varrho_{\epsilon}}}\rightarrow \vectorv \text{ \emph{strongly} in }L^1_{\text{loc}}((0,T)\times \Om;\R^3).
\end{align*}

It ends proof of the theorem.

\section{{Extension to the Navier--Stokes System}}

Here our goal is to give a proper definition of \emph{dissipative} solution for Navier--Stokes equation. We consider another characteristic number i.e. Reynolds number. In high Reynolds number limit, we will obtain the same \emph{ target system}.
\subsection{Definition of dissipative Solution for Navier-Stokes system :}
Let $\varrho$ be the density and $\vectoru$ be the velocity. In time-space cylinder $Q_T=(0,T)\times \Omega$, we consider: 
\begin{itemize}
	\item \textbf{Conservation of Mass: }
	\begin{align}
	\partial_t \varrho + \text{div}_x (\varrho\vectoru)&=0. \label{cnse:cont}
	\end{align}
	\item \textbf{Conservation of Momentum:}
	\begin{align}\label{cnse:mom:sc}
	\begin{split}
	\partial_t(\varrho \mathbf{u}) + \Div (\varrho \mathbf{u} \otimes \mathbf{u})+\frac{1}{\text{Ma}^2}\nabla_x p(\varrho)+\frac{1}{\text{Ro}} \mathbf{b} \times \varrho\mathbf{u}\\
	=\frac{1}{\text{Re}}\Div \mathbb{S}(\nabla_x \mathbf{u})+ \frac{1}{\text{Fr}^2}\varrho \nabla_{x}G. 
	\end{split}
	\end{align}	
	
	\item  \textbf{Constitutive Relation:}
	Here $\mathbb{S}(\nabla_x \mathbf{u})$ is \textit{Newtonian stress tensor} defined by
	\begin{align}\label{cauchy_str}
	\mathbb{S}(\nabla_x \mathbf{u})=\mu \bigg(\frac{\nabla_x \vectoru + \nabla_x^{T} \vectoru}{2}-\frac{1}{d} (\Div\vectoru)\mathbb{I} \bigg) + \lambda (\Div \vectoru) \mathbb{I},
	\end{align}
	where $\mu >0$ and $\lambda >0$ are the \textit{shear} and \textit{bulk} viscosity coefficients, respectively. 
\end{itemize}
\begin{itemize}
	\item The scaled system contains all specified  characteristic numbers as in \eqref{scaling} along with,
	\subitem Re-- Reynolds number.\\
	Here  we consider,
	\begin{align}\label{scaling:nse}
	\text{Ma} \approx \epsilon^m,\; \text{Ro} \approx \epsilon,\; \text{Re}\approx \epsilon^{-\alpha},\; \text{Fr} \approx \epsilon^n \text{ for } \epsilon>0,\; m,n,\alpha>0 \text{ and } \frac{m}{2}>n\geq1.
	\end{align}
	
	\item \textbf{Pressure Law:} In an isentropic setting, the pressure $p$ and the density $\varrho$ of the fluid are interrelated by
	\begin{align}\label{p-condition:nse}
	\begin{split}
	p(\varrho)=a \varrho^\gamma,\; a>0,\; \gamma> 1.
	\end{split}
	\end{align}
	
	\item \textbf{Boundary condition:} Here we consider complete slip condition for velocity on the horizontal boundary i.e.
	\begin{align}\label{BC:nse}
	\vectoru \cdot \mathbf{n}|_{\partial\Om}=[\mathbb{S}(\nabla_{x}\vectoru)\cdot \mathbf{n}]_{\text{tan}}|_{\partial\Om}=0,\; \mathbf{n}=(0,0,\pm 1).
	\end{align}
	
	\item \textbf{Far field condition:} Let $(\tilde{\varrho}_{\epsilon},\mathbf{0})$ be a static solution. 
	we assume the condition as,
	\begin{align}\label{far_field:nse}
	|\varrho - \tilde{\varrho}_\epsilon| \rightarrow 0, \; \vectoru \rightarrow \mathbf{0} \text{ as } \vert x_h \vert \rightarrow \infty.
	\end{align}
	\item \textbf{Initial data:}
	For each $\epsilon>0$, we supplement the initial data as
	\begin{align}\label{initial condition:nse}
	{\varrho(0,\cdot)=\varrho_{\epsilon, 0},\; (\varrho\vectoru)(0,\cdot)= (\varrho \vectoru)_{\epsilon, 0}.}
	\end{align}
	
\end{itemize}
 Here we provide the definition of \emph{dissipative solution} for the Navier--Stokes system.

\begin{Def}\label{diss_defn_NS}
	Let $\epsilon>0$ and $\tilde{\varrho}_\epsilon>0$. We say functions $\varrho_{\epsilon},\vectoru_{\epsilon}$  with,
	\begin{align}
	\varrho_{\epsilon}-\tilde{\varrho_\epsilon} \in  &C_{\text{weak}}([0,T];L^2+L^{\gamma}(\Om)),\;\varrho_\epsilon \geq 0,\; \varrho_{\epsilon}\vectoru_{\epsilon} \in   C_{\text{weak}}([0,T];L^2+L^{\frac{2\gamma}{\gamma+1}}(\Om)),\\
	&\vectoru_{\epsilon}\in L^2(0,T;W^{1,2}(\Om)),\; \vectoru_{\epsilon}\cdot \mathbf{n}|_{\partial\Om}=0,
	\end{align}
	are a \emph{dissipative solution} to \eqref{cnse:cont}-\eqref{far_field:nse} with initial data $\varrho_{0,\epsilon}, (\varrho \vectoru)_{0,\epsilon}$ satisfying,
	\begin{align}\label{fe_id:nse}
	\begin{split}
	\varrho_{0,{\epsilon}}\geq 0,\; E_{0,{\epsilon}}=\int_{ \Om} \bigg( \frac{1}{2} \frac{ \vert (\varrho \vectoru)_{0,\epsilon} \vert^2}{\varrho_{0,\epsilon}} + P(\varrho_{0,{\epsilon}})-(\varrho_{0,{\epsilon}}-\tilde{\varrho}_\epsilon)P^{\prime}(\tilde{\varrho}_\epsilon)-P(\tilde{\varrho}_\epsilon)\bigg)\dx < \infty,
	\end{split}
	\end{align}
	if there exist the \emph{turbulent defect measures}
	\begin{align}
	\begin{split}
	&{\mathfrak{R}_{m_{\epsilon}} \in L^{\infty}(0,T;\mathcal{M}^+({\overline{\Om};\mathbb{R}^{d\times d}_{\text{sym}}}))},\; \mathfrak{R}_{e_{\epsilon}} \in L^{\infty}(0,T;\mathcal{M}^{+}(\overline{\Om})),
	\end{split}
	\end{align}
	satisfying compatibility condition
	\begin{align}\label{compat_turb_mes_NS}
	\lambda_1 \text{trace}\mathfrak{R}_{m_{\epsilon}} \leq \mathfrak{R}_{e_{\epsilon}} \leq \lambda_2 \text{trace} \mathfrak{R}_{m_{\epsilon}},\; \lambda_1,\lambda_2>0,
	\end{align}
	such that the following holds,
	\begin{itemize}
		\item \textbf{Equation of Continuity:}
		For any $\tau \in (0,T) $ and any $\varphi \in C_{c}^{1}([0,T]\times \bar{\Om})$ {it holds}
		\begin{align}\label{continuity_eqn_weak}
		\begin{split}
		&\big[ \int_{ \Om}{ \varrho_{\epsilon}} \varphi \dx\big]_{t=0}^{t=\tau}=
		\int_0^{\tau} \int_{\Om} [ \varrho_{\epsilon} \partial_t \varphi +  \varrho_{\epsilon}\vectoru_{\epsilon} \cdot \nabla_x \varphi] \dx \dt .
		\end{split}
		\end{align}
		
		\item \textbf{Momentum equation:} For any $\tau\in (0,T)$ and any $\pmb{\varphi} \in C^{1}_c([0,T]\times \Om;\mathbb{R}^d)$ with $ \pmb{\varphi}\cdot \mathbf{n} |_{\partial{\Om}}=0$, it holds
		\begin{align}\label{momentum_eqn_weak}
		\begin{split}
		&\bigg[\int_{\Om}  \varrho_{\epsilon}\vectoru_{\epsilon}(\tau,\cdot)\cdot \vectorphi(\tau,\cdot) \dx\bigg]_{t=0}^{t=\tau} \\
		&=\int_0^{\tau}\int_{\Om} [ \varrho_{\epsilon}\vectoru_{\epsilon}\cdot \partial_{t} \vectorphi + \varrho_{\epsilon}\vectoru_{\epsilon}\otimes\vectoru_{\epsilon} : \nabla_x \vectorphi + \frac{1}{\epsilon^{2m}}p(\varrho_{\epsilon}) \Div \vectorphi + \frac{1}{\epsilon}\mathbf{b} \times \varrho_{\epsilon}\vectoru_{\epsilon} \cdot \vectorphi ] \dx \dt \\
		&-\int_0^{\tau}\int_{\Om}[\epsilon^\alpha\mathbb{S}(\nabla_x \vectoru):\nabla_x \vectorphi-  \frac{1}{\epsilon^{2n}} \varrho_{\epsilon} \nabla_{x}G\cdot \vectorphi] \dx \dt+\int_0^{\tau}\int_{\overline{\Om}} \nabla_x \vectorphi : \text{d}\mathfrak{R}_{m_{\epsilon}} \dt.
		\end{split}
		\end{align}
		\item \textbf{Energy inequality:} The total energy $E$ is defined in $[0,T)$ as,
		\begin{align*}
		E_{\epsilon}(\tau)= \int_{\Om} &\bigg( \frac{1}{2} {\varrho_\epsilon}{ {\vert \vectoru_{\epsilon} \vert^2}} +\frac{1}{\epsilon^{2m}}(P(\varrho_{\epsilon})-(\varrho_{\epsilon}-\tilde{\varrho}_\epsilon)P^{\prime}(\tilde{\varrho}_{\epsilon})) -P(\tilde{\varrho}_{\epsilon})\bigg)(\tau ,\cdot)\dx.
		\end{align*}
		It satisfies,

		\begin{align}\label{energy_inequality:nse}
		E_\epsilon (\tau) +\epsilon^\alpha  \int_{0}^{\tau} \int_{ \Om} \mathbb{S}(\nabla_x \vectoru) :\nabla_x \vectoru\dx \dt+ \int_{\overline{\Omega}}  {\rm d \; } \mathfrak{R}_{e_\epsilon} (\tau, \cdot) \leq E_{0,\epsilon}
		\end{align}
		{for a.a. $\tau > 0$.}			
	\end{itemize}
\end{Def}

\begin{Rem}
		The class of test functions in the momentum equations correspond to the 
		complete slip (Navier slip) boundary conditions. These are necessary to avoid problems with boundary layer. 
\end{Rem}

\begin{Th}
	Suppose $\Om$ be the domain specified above and pressure follows \eqref{p-condition:nse}. If $(\varrho_{0,\epsilon},(\varrho \vectoru)_{0,\epsilon})$ satisfies \eqref{fe_id:nse}, then there exists \emph{dissipative solution} as defined above.
\end{Th}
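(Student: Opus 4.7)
\medskip

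\noindent\textbf{Proof plan.}
The strategy is the standard three-level approximation scheme of Feireisl--Novotn\'y / Breit--Feireisl--Hofmanov\'a, modified along the lines of Basari\'c for the unbounded slab. First I would fix a large ball $B_R\subset\mathbb{R}^2$ and work on the truncated domain $\Omega_R=(B_R\cap\R^2)\times(0,1)$, augmenting \eqref{cnse:cont} by an artificial viscosity $\eta\Delta\varrho$ and a regularising pressure perturbation $\delta\varrho^\beta$ with $\beta\gg1$. On this bounded cylinder I would obtain a Galerkin approximation with a finite-dimensional velocity, whose global-in-time existence is straightforward from the a priori energy identity together with the maximum principle for the parabolic density equation. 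The complete slip boundary condition \eqref{BC:nse} is natural here: one just picks a Galerkin basis in $\{\vectoru\in W^{1,2}(\Omega_R;\R^3):\vectoru\cdot\mathbf n|_{\partial\Omega_R}=0\}$, avoiding boundary layer issues.

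Next I would pass to the limits $N\to\infty$ (Galerkin parameter), $\eta\to0$, and $\delta\to0$ in this order. The energy inequality, multiplied by the viscosity factor $\epsilon^\alpha$ and including the Coriolis and gravity terms (both of which are dissipation-free after integration since $\mathbf b\times\varrho\vectoru\cdot\vectoru=0$ and $G=-x_3$ is smooth), yields the uniform bounds
\begin{align*}
\|\sqrt{\varrho_\epsilon}\vectoru_\epsilon\|_{L^\infty_tL^2_x}+\epsilon^{\alpha/2}\|\nabla\vectoru_\epsilon\|_{L^2_{t,x}}+\tfrac{1}{\epsilon^m}\|\varrho_\epsilon-\tilde\varrho_\epsilon\|_{L^\infty_t(L^2+L^\gamma)}\leq C,
\end{align*}
needed to extract weak-$(*)$ limits. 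Since we do \emph{not} restrict to $\gamma>3/2$, we cannot get strong convergence of $\varrho$ in a Lebesgue space in which the pressure and convective term act continuously. The key idea of the \emph{dissipative solution} framework is to absorb the associated concentration/oscillation defects into the measures $\mathfrak R_{m_\epsilon}$ and $\mathfrak R_{e_\epsilon}$: one sets
\begin{align*}
\mathfrak R_{m_\epsilon}=\overline{\varrho\vectoru\otimes\vectoru}-\varrho\vectoru\otimes\vectoru+\tfrac{1}{\epsilon^{2m}}\bigl(\overline{p(\varrho)}-p(\varrho)\bigr)\mathbb I,\qquad
\mathfrak R_{e_\epsilon}=\tfrac12\bigl(\overline{\varrho|\vectoru|^2}-\varrho|\vectoru|^2\bigr)+\tfrac{1}{\epsilon^{2m}}\bigl(\overline{P(\varrho)}-P(\varrho)\bigr),
\end{align*}
where the overbar denotes the weak limit along the approximation parameter. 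Standard lower semicontinuity of convex functionals gives $\mathfrak R_{e_\epsilon}\geq 0$ as a measure, and the polynomial growth of $p$ and $P$ together with the algebraic identity $P(\varrho)\sim\varrho p(\varrho)/\gamma$ yields the compatibility \eqref{compat_turb_mes_NS} with explicit constants $\lambda_1,\lambda_2$ depending only on $\gamma$.

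Then I would let $R\to\infty$. Here the unbounded-domain adaptation of Basari\'c \cite{Bd2019} enters: thanks to the far-field condition \eqref{far_field:nse} and to the fact that $\tilde\varrho_\epsilon$ depends only on $x_3$ with $|\tilde\varrho_\epsilon-1|+|\nabla\tilde\varrho_\epsilon|\leq\epsilon^{2(m-n)}$ uniformly in $x_h$, the relative energy $E_{0,\epsilon}$ is finite on the whole slab, and the uniform bounds survive. A diagonal/Cantor extraction on an exhaustion $\Omega_{R_k}$, together with weak compactness of bounded Radon measures on $\overline\Omega$, produces a limit triple $(\varrho_\epsilon,\vectoru_\epsilon,\mathfrak R_{m_\epsilon},\mathfrak R_{e_\epsilon})$ satisfying \eqref{continuity_eqn_weak}, \eqref{momentum_eqn_weak} and \eqref{energy_inequality:nse} — the energy inequality surviving the limit because $E_\epsilon(\tau)+\int\mathrm d\mathfrak R_{e_\epsilon}$ is weakly lower semicontinuous and $E_{0,\epsilon}$ is prescribed. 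The weak continuity in time required in the definition follows from the usual Arzel\`a--Ascoli argument applied to the equations themselves.

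The main technical obstacle is the combined effect of low adiabatic exponent and unbounded domain at the vanishing-artificial-viscosity step $\eta\to0$: one cannot rely on the Lions/Feireisl weak continuity of the effective viscous flux (which requires $\gamma>3/2$ in 3D) to identify $\overline{p(\varrho)}$, so the argument must proceed with the defect measures present from the outset, using the renormalised continuity equation only to propagate oscillation defects and carefully checking that the compatibility \eqref{compat_turb_mes_NS} is preserved in each limit. Apart from this conceptual point, the rest of the proof is a routine rerun of the arguments of \cite{BeFH2019,BeFH2019(2)} with the Coriolis term (which is linear and dissipation-free) and the gravity term (which is a bounded Lipschitz perturbation on the slab) trivially passing to the limits.
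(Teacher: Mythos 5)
Your overall strategy --- approximate, extract uniform bounds from the scaled energy inequality, pass to weak limits, and absorb the concentration/oscillation defects of the convective term and the pressure into $\mathfrak{R}_{m}$ and $\mathfrak{R}_{e}$, with the compatibility condition \eqref{compat_turb_mes_NS} coming from the algebraic relation between $p$ and $P$ --- is the same as the paper's. The route, however, is heavier than what the paper actually does. The paper fixes $\epsilon=1$, adds \emph{only} the artificial pressure $\delta\nabla_x\varrho^\Gamma$ with $\Gamma>\frac32$ to the momentum equation, and then simply quotes the known existence of finite-energy weak solutions for this augmented system (Lions, Feireisl, and Novotn\'y--Pokorn\'y for the unbounded slab, with the adaptation of Basari\'c), so that a single limit $\delta\to0$ remains and the defect measures are created only there. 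This also exposes an inaccuracy in your plan: the effective-viscous-flux/compactness machinery is \emph{not} lost at the Galerkin and $\eta\to0$ stages, because there the artificial pressure with large exponent supplies exactly the integrability that $\gamma\le\frac32$ fails to provide; the subcritical exponent genuinely obstructs compactness only when $\delta$ is removed, and that is precisely where the dissipative formulation is invoked. Your more conservative plan of dragging defect measures along from the $\eta\to0$ step onward could be made to work, but it is unnecessary bookkeeping compared with starting from the $\delta$-level weak solutions directly on the unbounded slab.

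One concrete gap in your limit passage: the artificial pressure does not vanish for free as $\delta\to0$. The energy bound only gives $\delta\varrho_\delta^\Gamma$ bounded in $L^\infty(0,T;L^1(\Om))$, so along a subsequence $\delta\varrho_\delta^\Gamma\to\zeta$ weakly-$(*)$ in $L^\infty(0,T;\mathcal{M}(\overline{\Om}))$ with $\zeta$ possibly nonzero, and this residue must be absorbed into the defect measures: in the paper $\mathfrak{R}_m$ carries the extra term $\zeta\,\mathbb{I}$ and $\mathfrak{R}_e$ the term $\frac{1}{\Gamma-1}\zeta$. With your formulas as written the limit momentum equation does not close, and once $\zeta$ is included the compatibility constants $\lambda_1,\lambda_2$ depend on $\Gamma$ as well as $\gamma$.
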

We prove the existence theorem for $\epsilon=1$.
\begin{proof}
	Here we give an extended outline of proof. We know that the existence theory in the class of finite energy weak solutions was developed by Lions \cite{PL1998} and later extended by Feireisl \cite{F2004b} to the so far subcritical exponent $\gamma>\frac{3}{2}$. For unbounded domain similar result has been proposed by Novotn{\'y} and Pokorn{\'y} in \cite{NP2007}.\par
	Here our goal is to add $\delta \nabla_{x} \varrho^\Gamma $ in the momentum equation with $\Gamma\geq \frac{3}{2}$ and then the system admits a finite energy weak solutions $(\varrho_\delta,\vectoru_\delta)$. Then we will show that this approximate solution converges to a dissipative solution of above described system.
	This motivates the following approximate problem,
	\begin{align}
	\partial_t \varrho_{\delta} + \text{div}_x (\varrho_{\delta} \mathbf{u}_{\delta})&=0, \label{cnse:cont:ap}\\\
	\partial_t(\varrho_{\delta} \mathbf{u}_{\delta}) + \Div (\varrho_{\delta} \mathbf{u}_{\delta} \otimes \mathbf{u}_{\delta})+\nabla_x p(\varrho_{\delta})+ &{\delta} \nabla_{x} \varrho_{{\delta}}^{\Gamma}+ \mathbf{b}\times \varrho_{\delta} \vectoru_{\delta} \nonumber \\
	& =\Div \mathbb{S}(\nabla_x \mathbf{u}_{\delta}) + \varrho_{{\delta}}\nabla_{x}G, \label{cnse:mom:ap}\\
	\vectoru_{\delta}\cdot \mathbf{n}|_{\partial \Om}&=[\mathbb{S}(\nabla_{x}\vectoru_\delta)\cdot \mathbf{n}]_{\text{tan}}|_{\partial\Om}=0 . \label{bc:ap}
	\end{align}
	with $\Gamma>\frac{3}{2}$.
	We assume that for each $\delta>0$ the \emph{static solution} of approximate problem $\tilde{\varrho}_\delta$ has the following property 
	\begin{align}
	\sup_{  x_3\in [0,1]}\vert \tilde{\varrho}_\delta - \tilde{\varrho}\vert \leq \delta,
	\end{align}
	where, $\tilde{\varrho}>0$ is static solution for the Navier--Stokes problem with $\nabla_{x} P^{\prime} (\tilde{\varrho})=\nabla_{x}G$.
	Further we assume that for the above mentioned problem, initial condition $\{ \varrho_{\delta,0}, (\varrho \vectoru)_{\delta,0} \}$ belongs to a certain regularity class for which weak solution exists. As an additional assuption we have,
	\begin{align*}
	 E^{\delta}_{0,{\delta}}=\int_{ \Om} \bigg( \frac{1}{2} \frac{ \vert (\varrho \vectoru)_{0,\delta} \vert^2}{\varrho_{0,\delta}} + H(\varrho_{0,{\delta}})-(\varrho_{0,{\delta}}-\tilde{\varrho}_\delta)H^{\prime}(\tilde{\varrho}_\delta)-H(\tilde{\varrho}_\delta)\bigg)\dx \\
	 \rightarrow	 E_{0}=\int_{ \Om} \bigg( \frac{1}{2} \frac{ \vert (\varrho \vectoru)_{0} \vert^2}{\varrho_{0}} + P(\varrho_{0})-(\varrho_{0}-\tilde{\varrho})P^{\prime}(\tilde{\varrho})-P(\tilde{\varrho})\bigg)\dx\text{ in } L^1(\Om),
	\end{align*}
	where $H(s)=\frac{1}{\gamma-1}p(s)+\delta \frac{1}{\Gamma-1} s^{\Gamma}$.\\	
	Clearly from existence of weak solution we have several apriori bounds, i.e.
	\begin{align*}
	&\Vert \varrho_\delta  - \tilde{\varrho}_\delta \Vert_{L^{\infty}(0,T;L^2+L^{\gamma}(\Om))} \leq C,\\
	& \Vert \sqrt{\varrho_{{\delta}}} \vectoru_{\delta} \Vert_{L^2(\Om;\R^3)} \leq C,\\
	& \Vert \vectoru_\delta \Vert_{L^2(0,T;W^{1,2}(\Om:\R^3)}\leq C,\\
	&\Vert H(\varrho_{{\delta}})-(\varrho_{{\delta}}-\tilde{\varrho}_\delta)H^{\prime}(\tilde{\varrho}_\delta)-H(\tilde{\varrho}_\delta) \Vert_{L^{\infty}(0,T;L^1(\Om))} \leq C.
	\end{align*}
	As a consequence of that we obtain,
	\begin{align}
	\begin{split}
	&\Vert \varrho_{\delta} \vectoru_{\delta} \Vert_{L^{\infty}(0,T;L^2+L^{\sfrac{2\gamma}{\gamma+1}}(\Om;\R^3))} \leq C,\\
	& \text{ess}\sup_{  t\in [0,T]} \int_{ \Om} \delta \varrho_{\delta}^\Gamma \dx \leq C	.
	\end{split}
	\end{align}
	From the above bounds we get,
	\begin{align*}
	&\varrho_\delta - \tilde{\varrho}_\delta \rightarrow \varrho -\tilde{\varrho} \text{ weakly(*) in } {L^{\infty}(0,T;L^2+L^{\gamma}(\Om))},\\
	&\vectoru_\delta \rightarrow \vectoru \text{ weakly in } {L^2(0,T;W^{1,2}(\Om:\R^3)}.
	\end{align*}
	Following the above consequence we also conclude,
	\begin{align*}
	\varrho_\delta \vectoru_\delta \rightarrow \varrho \vectoru \text{ weakly(*) in } {L^{\infty}(0,T;L^2+L^{\sfrac{2\gamma}{\gamma+1}}(\Om))}.
	\end{align*}
	Let us introduce the conservative variable $\vectorm_\delta=\varrho_{{\delta}}\vectoru_{\delta}$.
	In terms of momentum we rewrite kinetic energy as,
	\[ (\varrho_{{\delta}},\vectorm_\delta )\mapsto \frac{1}{2}\frac{\vert \vectorm_\delta\vert^2}{\varrho_\delta} =
	\begin{cases} 
	\frac{1}{2}\frac{\vert (\varrho_{{\delta}}\vectoru_\delta)\vert^2}{\varrho_\delta}  & \text{ if } ,\varrho_\delta \neq 0, \vectorm_\delta \neq 0 ,\\
	0 & \text{ if } \varrho_\delta =0,\; \vectorm_\delta=0 ,\\
	\infty & \text{ if }\varrho_\delta=0,\; \vectorm_\delta\neq 0.
	\end{cases}
	\]
	As an observation we have the above map is convex l.s.c. From energy inequality, it is worth to notice that it is $\infty$ only on a measure zero set in $(0,T)\times \Om$.
	Using convexity of $p(\cdot)$ and $[\varrho,\vectorm]\mapsto \frac{\vectorm \times \vectorm}{\varrho}$, and also using fact $L^1(\Om)$ continuously embedded in $\mathcal{M}(\bar{\Om})$, we conclude,
	
	\begin{align}
	\begin{split}
	& \frac{\vectorm_\delta \times \vectorm_\delta}{\varrho_\delta}\rightarrow \overline{\frac{\vectorm \times \vectorm}{\varrho}} \text{ weakly-(*) in } L^{\infty}(0,T;\mathcal{M}(\overline{\Om};\R^{d\times d}_{\text{sym}})),\\
	& p(\varrho_\delta)  \rightarrow \overline{p(\varrho)} \text{ weakly-(*) in } L^{\infty}(0,T;\mathcal{M}(\overline{\Om})),\\
	&\delta \varrho_\delta^\Gamma (t,\cdot) \rightarrow \zeta \text{ weakly-(*) in } L^{\infty}(0,T;\mathcal{M}(\overline{\Om})).
	\end{split}
	\end{align}
	We choose,
	\begin{align}
	&\mathfrak{R}_{m}= \bigg[\overline{\frac{\vectorm \times \vectorm}{\varrho}}-{\frac{\vectorm \times \vectorm}{\varrho}}\bigg]+\big[ \overline{p(\varrho)}-p(\varrho)+ \zeta\big]\mathbb{I},
	\end{align}
	and
	\begin{align}
	\mathfrak{R}_e= \overline{\frac{1}{2}\frac{\vert \vectorm \vert^2}{\varrho}}-\frac{1}{2}\frac{\vert \vectorm \vert^2}{\varrho} + \overline{P(\varrho)}-P(\varrho)+ \frac{1}{\Gamma-1}\zeta .
	\end{align}

	Clearly the compatibility of two \emph{turbulent defect measure} is clear from above equations. Arguing similarly as in Breit et. al. \cite{BeFH2019(2)} we obtain,
	\[ {\mathfrak{R}_{m} \in L^{\infty}(0,T;\mathcal{M}^+({\overline{\Om};\mathbb{R}^{d\times d}_{\text{sym}}}))},\; \mathfrak{R}_{e} \in L^{\infty}(0,T;\mathcal{M}^{+}(\overline{\Om})).\] Now we are in a position to conclude that $\varrho, \vectoru, \mathfrak{R}_m \text{ and } \mathfrak{R}_e$ is a \emph{dissipative solution }for the Navier--Stokes equation.
	
\end{proof}

Finally we state the theorem,
\begin{Th}\label{th2}
	Let pressure $p$ follows \eqref{p-condition:nse}. We assume that the initial data is \emph{well-prepared}, i.e. it follows \eqref{well_prepared_id}. We also consider $\vectorv_0\in W^{k,2}$ with $k\geq 3$. Let $(\varrho_{\epsilon}, \vectoru_{\epsilon}) $ be a dissipative solution as in definition \eqref{diss_defn_NS} in $(0,T)\times \Om$.  Then,
	\begin{align}
	\begin{split}
	&\text{ess}\sup_{t\in (0,T)} \Vert \varrho_{\epsilon} - \tilde{\varrho}_\epsilon \Vert_{(L^2+L^\gamma)(\Om)} \leq \epsilon^m c,\\
	&{\sqrt{\varrho_{\epsilon}}}{\vectoru_{\epsilon}} \rightarrow \vectorv \begin{cases}
	\text{weakly(*) in} L^{\infty}(0,T;L^2(\Om;\R^3)),\\
	\text{ strongly in } L^1_{\text{loc}}((0,T)\times \Om;\R^3),
	\end{cases}
	\end{split}
	\end{align}
	where, $\vectorv=(\vectorv_h,0)$ and $\vectorv_h$ is the unique solution of Euler system \eqref{target_sys_2D euler} with initial data $\vectorv_{0}$.
\end{Th}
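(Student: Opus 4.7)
The plan is to mimic the proof of Theorem \ref{th1}, using the relative energy inequality adapted to the Navier--Stokes setting. First I would derive the appropriate relative energy inequality: starting from the weak formulation in Definition \ref{diss_defn_NS} with the same test functions $(\tilde{\varrho},\tilde{\vectoru})$ (regularized and extended by density as in the Euler case via \cite{Bd2019}), I expect to recover all eight terms $\mathcal{L}_1,\dots,\mathcal{L}_8$ from \eqref{rel_ent_m_2} plus two new viscous contributions: the full dissipation $\epsilon^\alpha\int_0^\tau\int_\Om \mathbb{S}(\nabla_x\vectoru_\epsilon):\nabla_x\vectoru_\epsilon\,\dx\dt$ appearing on the left-hand side (coming from the energy inequality \eqref{energy_inequality:nse}) and a cross term
\begin{equation*}
\mathcal{L}_9 := -\epsilon^\alpha\int_0^\tau\int_\Om \mathbb{S}(\nabla_x\vectoru_\epsilon):\nabla_x\tilde{\vectoru}\,\dx\dt
\end{equation*}
arising from the test-function form of the viscous stress.

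Next I would repeat verbatim the choice $\tilde{\vectoru}=\vectorv=(\vectorv_h,0)$ and $\tilde{\varrho}=\tilde{\varrho}_\epsilon+\epsilon^m q_\epsilon$, where $\vectorv_h$ solves the 2D Euler system \eqref{target_sys_2D euler} and $q_\epsilon$ solves the non-oscillatory balance \eqref{non_osc}. Note the slip compatibility $\vectorv\cdot\mathbf{n}|_{\partial\Om}=0$ is automatic since $\vectorv$ has no vertical component. The eight terms $\mathcal{L}_1,\dots,\mathcal{L}_8$ are then controlled exactly as in Section 4.6 of the excerpt: the well-prepared assumption \eqref{well_prepared_id} kills $\mathcal{L}_1$, the 2D Euler equation absorbs $\mathcal{L}_2$, $\mathcal{L}_3$ produces a coercive $\int_0^\tau \Epsilon_\epsilon\,\dt$, the scaling $\tfrac{m}{2}>n\geq 1$ together with the estimate \eqref{q-epsilon est} for $q_\epsilon$ and the uniform bounds \eqref{bound1} (still valid in the Navier--Stokes case) force $\mathcal{L}_4,\mathcal{L}_5,\mathcal{L}_6$ into a generic $\xi(\epsilon)\to 0$, the choice $G=-x_3$ with $\vectorv_3=0$ gives $\mathcal{L}_7=0$, and $\mathcal{L}_8$ is dominated by $\int_0^\tau\int_{\overline\Om} \text{d}\mathfrak{R}_{e_\epsilon}\,\dt$.

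The genuinely new step is the viscous cross term $\mathcal{L}_9$. Using Cauchy--Schwarz and the a priori bound $\epsilon^\alpha\int_0^\tau\int_\Om \mathbb{S}(\nabla_x\vectoru_\epsilon):\nabla_x\vectoru_\epsilon\,\dx\dt\le C$ produced by the energy inequality, I would estimate
\begin{equation*}
|\mathcal{L}_9| \le \Bigl(\epsilon^\alpha\int_0^\tau\!\!\int_\Om \mathbb{S}(\nabla_x\vectoru_\epsilon):\nabla_x\vectoru_\epsilon\,\dx\dt\Bigr)^{1/2}\Bigl(\epsilon^\alpha\int_0^\tau\!\!\int_\Om \mathbb{S}(\nabla_x\vectorv):\nabla_x\vectorv\,\dx\dt\Bigr)^{1/2} \le C\,\epsilon^{\alpha/2},
\end{equation*}
since $\vectorv$ is smooth and supported so that $\nabla_x\vectorv\in L^\infty(0,T;L^2(\Om))$ (it is $x_3$-independent on a slab of finite height); so $|\mathcal{L}_9|\le\xi(\epsilon)$ as well. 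Importantly, the nonnegative dissipation on the left can be simply discarded after being used for $\mathcal{L}_9$, since we only need an upper bound on $\Epsilon_\epsilon(\tau)$.

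Once every $\mathcal{L}_i$ is bounded by $\xi(\epsilon)+c\int_0^\tau\Epsilon_\epsilon\,\dt+c\int_0^\tau\int_{\overline\Om}\text{d}\mathfrak{R}_{e_\epsilon}\,\dt$, Gr\"onwall's lemma yields $\Epsilon_\epsilon(\tau)+\int_{\overline\Om}\text{d}\mathfrak{R}_{e_\epsilon}(\tau,\cdot)\le \xi(\epsilon)C(T)\to 0$, and coercivity of the relative energy functional (the analogue of the remark after \eqref{rel-ent-epsilon}) translates this into the claimed density estimate and the strong $L^1_{\text{loc}}$ convergence $\sqrt{\varrho_\epsilon}\vectoru_\epsilon\to\vectorv$. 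The main obstacle I anticipate is verifying that the relative energy inequality extends to Sobolev-regularity test functions in the Navier--Stokes setting (the density argument of \cite{Bd2019} must still close despite the viscous term), and making sure the cross viscous term truly vanishes without any further assumption on $\alpha$; here the positivity of $\alpha$ in \eqref{scaling:nse} is exactly what is needed.
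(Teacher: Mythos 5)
Your proposal is correct and follows essentially the same route as the paper: the paper's proof of Theorem \ref{th2} is literally the one-line remark that the argument is the same as for Theorem \ref{th1} with a few extra (viscous) terms, handled as in \cite{FN2014ii}. Your write-up simply fills in those details --- the dissipation kept on the left, the cross term $\mathcal{L}_9$ absorbed via Cauchy--Schwarz and the uniform bound $\epsilon^\alpha\int_0^\tau\int_\Om \mathbb{S}(\nabla_x\vectoru_\epsilon):\nabla_x\vectoru_\epsilon\,\dx\dt\le C$ together with $\alpha>0$ --- which is exactly what the paper intends.
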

\begin{proof}
	The proof is similar as before only we have to consider few extra terms, see \cite{FN2014ii}.
\end{proof}

\vspace{5mm}

\centerline{ \bf Acknowledgement}
\vspace{2mm}

The work is supported by Einstein Stiftung, Berlin. I would like to thank my  Ph.D supervisor Prof. E. Feireisl for his valuable suggestions and comments.
\begin{bibdiv}
	\begin{biblist}
		
		\bib{AB1997}{article}{
			author={Alibert, J.~J.},
			author={Bouchitt{\'e}, G.},
			title={Non-uniform integrability and generalized {Y}oung measures},
			date={1997},
			ISSN={0944-6532},
			journal={J. Convex Anal.},
			volume={4},
			number={1},
			pages={129\ndash 147},
			review={\MR{1459885}},
		}
		
		\bib{BMN1999}{article}{
			author={Babin, A.},
			author={Mahalov, A.},
			author={Nicolaenko, B.},
			title={Global regularity of 3{D} rotating {N}avier-{S}tokes equations
				for resonant domains},
			date={1999},
			ISSN={0022-2518},
			journal={Indiana Univ. Math. J.},
			volume={48},
			number={3},
			pages={1133\ndash 1176},
			url={https://doi.org/10.1016/S0893-9659(99)00208-6},
			review={\MR{1736966}},
		}
		
		\bib{BMN2001}{article}{
			author={Babin, A.},
			author={Mahalov, A.},
			author={Nicolaenko, B.},
			title={3{D} {N}avier-{S}tokes and {E}uler equations with initial data
				characterized by uniformly large vorticity},
			date={2001},
			ISSN={0022-2518},
			journal={Indiana Univ. Math. J.},
			volume={50},
			number={Special Issue},
			pages={1\ndash 35},
			url={https://doi.org/10.1512/iumj.2001.50.2155},
			note={Dedicated to Professors Ciprian Foias and Roger Temam
				(Bloomington, IN, 2000)},
			review={\MR{1855663}},
		}
		
		\bib{Bd2019}{article}{
			author={{Basari{\'c}}, D.},
			title={Vanishing viscosity limit for the compressible navier-stokes
				system via measure-valued solutions},
			date={2019Mar},
			journal={arXiv e-prints},
			pages={arXiv:1903.05886},
			eprint={1903.05886},
		}
		
		\bib{BeFH2019}{article}{
			author={{Breit}, D.},
			author={{Feireisl}, E.},
			author={{Hofmanov{\'a}}, M.},
			title={Dissipative solutions and semiflow selection for the complete
				euler system},
			date={2019Apr},
			journal={arXiv e-prints},
			pages={arXiv:1904.00622},
			eprint={1904.00622},
		}
		
		\bib{BeFH2019(2)}{article}{
			author={{Breit}, D.},
			author={{Feireisl}, E.},
			author={{Hofmanov{\'a}}, M.},
			title={Generalized solutions to models of inviscid fluids},
			date={2019Jul},
			journal={arXiv e-prints},
			pages={arXiv:1907.00757},
			eprint={1907.00757},
		}
		
		\bib{B2018}{article}{
			author={{B\v rezina}, J.},
			title={Existence of measure-valued solutions to a complete {E}uler
				system for a perfect gas},
			date={2018-05},
			journal={arXiv e-prints},
			pages={arXiv:1805.05570},
			eprint={1805.05570},
		}
		
		\bib{BF2018b}{article}{
			author={B{\v r}ezina, J.},
			author={Feireisl, E.},
			title={Measure-valued solutions to the complete {E}uler system},
			date={2018},
			ISSN={0025-5645},
			journal={J. Math. Soc. Japan},
			volume={70},
			number={4},
			pages={1227\ndash 1245},
			url={https://doi.org/10.2969/jmsj/77337733},
			review={\MR{3868717}},
		}
		
		\bib{BrF2018}{article}{
			author={{Bruell}, G.},
			author={{Feireisl}, E.},
			title={On a singular limit for stratified compressible fluids},
			date={2018Feb},
			journal={arXiv e-prints},
			pages={arXiv:1802.10340},
			eprint={1802.10340},
		}
		
		\bib{BM2018}{article}{
			author={{B{\v r}ezina}, J.},
			author={{M{\'a}cha}, V.},
			title={Low stratifucation of the complete euler system},
			date={2018Dec},
			journal={arXiv e-prints},
			pages={arXiv:1812.08465},
			eprint={1812.08465},
		}
		
		\bib{CDGG2006}{book}{
			author={Chemin, J.-Y.},
			author={Desjardins, B.},
			author={Gallagher, I.},
			author={Grenier, E.},
			title={Mathematical geophysics},
			series={Oxford Lecture Series in Mathematics and its Applications},
			publisher={The Clarendon Press, Oxford University Press, Oxford},
			date={2006},
			volume={32},
			ISBN={978-0-19-857133-9; 0-19-857133-X},
			note={An introduction to rotating fluids and the Navier-Stokes
				equations},
			review={\MR{2228849}},
		}
		
		\bib{E1977}{article}{
			author={Ebin, D.~G.},
			title={The motion of slightly compressible fluids viewed as a motion
				with strong constraining force},
			date={1977},
			ISSN={0003-486X},
			journal={Ann. of Math. (2)},
			volume={105},
			number={1},
			pages={141\ndash 200},
			url={https://doi.org/10.2307/1971029},
			review={\MR{0431261}},
		}
		
		\bib{F2004b}{book}{
			author={Feireisl, E.},
			title={Dynamics of viscous compressible fluids},
			series={Oxford Lecture Series in Mathematics and its Applications},
			publisher={Oxford University Press, Oxford},
			date={2004},
			volume={26},
			ISBN={0-19-852838-8},
			review={\MR{2040667}},
		}
		
		\bib{FGGvN2012}{article}{
			author={Feireisl, E.},
			author={Gallagher, I.},
			author={Gerard-Varet, D.},
			author={Novotn{\'y}, A.},
			title={Multi-scale analysis of compressible viscous and rotating
				fluids},
			date={2012},
			ISSN={0010-3616},
			journal={Comm. Math. Phys.},
			volume={314},
			number={3},
			pages={641\ndash 670},
			url={https://doi.org/10.1007/s00220-012-1533-9},
			review={\MR{2964771}},
		}
		
		\bib{FGN2012}{article}{
			author={Feireisl, E.},
			author={Gallagher, I.},
			author={Novotn{\'y}, A.},
			title={A singular limit for compressible rotating fluids},
			date={2012},
			ISSN={0036-1410},
			journal={SIAM J. Math. Anal.},
			volume={44},
			number={1},
			pages={192\ndash 205},
			url={https://doi.org/10.1137/100808010},
			review={\MR{2888285}},
		}
		
		\bib{FPAW2016}{article}{
			author={Feireisl, E.},
			author={Gwiazda, P.},
			author={{\'S}wierczewska-Gwiazda, A.},
			author={Wiedemann, E.},
			title={Dissipative measure-valued solutions to the compressible
				{N}avier-{S}tokes system},
			date={2016},
			ISSN={0944-2669},
			journal={Calc. Var. Partial Differential Equations},
			volume={55},
			number={6},
			pages={Art. 141, 20},
			url={https://doi.org/10.1007/s00526-016-1089-1},
			review={\MR{3567640}},
		}
		
		\bib{FKM2019}{article}{
			author={Feireisl, E.},
			author={Klingenberg, C.},
			author={Markfelder, S.},
			title={On the low mach number limit for the compressible {E}uler
				system},
			date={2019},
			ISSN={0036-1410},
			journal={SIAM J. Math. Anal.},
			volume={51},
			number={2},
			pages={1496\ndash 1513},
			url={https://doi.org/10.1137/17M1131799},
			review={\MR{3942857}},
		}
		
		\bib{FL2018}{article}{
			author={Feireisl, E.},
			author={Luk{\'a}{\v c}ov{\'a}-Medvidov{\'a}, M.},
			title={Convergence of a mixed finite element--finite volume scheme for
				the isentropic {N}avier-{S}tokes system via dissipative measure-valued
				solutions},
			date={2018},
			ISSN={1615-3375},
			journal={Found. Comput. Math.},
			volume={18},
			number={3},
			pages={703\ndash 730},
			url={https://doi.org/10.1007/s10208-017-9351-2},
			review={\MR{3807359}},
		}
		
		\bib{FLM2019}{article}{
			author={{Feireisl}, E.},
			author={{Luk{\'a}{\v c}ov{\'a}-Medvidov{\'a}}, M.},
			author={{Mizerov{\'a}}, H.},
			title={$\mathcal{K}-$convergence as a new tool in numerical analysis},
			date={2019Mar},
			journal={arXiv e-prints},
			pages={arXiv:1904.00297},
			eprint={1904.00297},
		}
		
		\bib{FN2009b}{book}{
			author={Feireisl, E.},
			author={Novotn{\'y}, A.},
			title={Singular limits in thermodynamics of viscous fluids},
			series={Advances in Mathematical Fluid Mechanics},
			publisher={Birkh{\"a}user Verlag, Basel},
			date={2009},
			ISBN={978-3-7643-8842-3},
			url={https://doi.org/10.1007/978-3-7643-8843-0},
			review={\MR{2499296}},
		}
		
		\bib{FN2014ii}{article}{
			author={Feireisl, E.},
			author={Novotn{\'y}, A.},
			title={Multiple scales and singular limits for compressible rotating
				fluids with general initial data},
			date={2014},
			ISSN={0360-5302},
			journal={Comm. Partial Differential Equations},
			volume={39},
			number={6},
			pages={1104\ndash 1127},
			url={https://doi.org/10.1080/03605302.2013.856917},
			review={\MR{3200090}},
		}
		
		\bib{FN2014}{article}{
			author={Feireisl, E.},
			author={Novotn{\'y}, A.},
			title={Scale interactions in compressible rotating fluids},
			date={2014},
			ISSN={0373-3114},
			journal={Ann. Mat. Pura Appl. (4)},
			volume={193},
			number={6},
			pages={1703\ndash 1725},
			url={https://doi.org/10.1007/s10231-013-0353-7},
			review={\MR{3275259}},
		}
		
		\bib{PAW2015}{article}{
			author={Gwiazda, P.},
			author={{\'S}wierczewska-Gwiazda, A.},
			author={Wiedemann, E.},
			title={Weak-strong uniqueness for measure-valued solutions of some
				compressible fluid models},
			date={2015},
			ISSN={0951-7715},
			journal={Nonlinearity},
			volume={28},
			number={11},
			pages={3873\ndash 3890},
			url={https://doi.org/10.1088/0951-7715/28/11/3873},
			review={\MR{3424896}},
		}
		
		\bib{KL1984}{article}{
			author={Kato, T.},
			author={Lai, C.~Y.},
			title={Nonlinear evolution equations and the {E}uler flow},
			date={1984},
			ISSN={0022-1236},
			journal={J. Funct. Anal.},
			volume={56},
			number={1},
			pages={15\ndash 28},
			url={https://doi.org/10.1016/0022-1236(84)90024-7},
			review={\MR{735703}},
		}
		
		\bib{KM1981}{article}{
			author={Klainerman, S.},
			author={Majda, A.},
			title={Singular limits of quasilinear hyperbolic systems with large
				parameters and the incompressible limit of compressible fluids},
			date={1981},
			ISSN={0010-3640},
			journal={Comm. Pure Appl. Math.},
			volume={34},
			number={4},
			pages={481\ndash 524},
			url={https://doi.org/10.1002/cpa.3160340405},
			review={\MR{615627}},
		}
		
		\bib{L2019}{article}{
			author={Li, Y.},
			title={Singular limit for rotating compressible fluids with centrifugal
				force in a finite cylinder},
			date={2019},
		}
		
		\bib{PL1998}{book}{
			author={Lions, P.-L.},
			title={Mathematical topics in fluid mechanics. {V}ol. 2},
			series={Oxford Lecture Series in Mathematics and its Applications},
			publisher={The Clarendon Press, Oxford University Press, New York},
			date={1998},
			volume={10},
			ISBN={0-19-851488-3},
			note={Compressible models, Oxford Science Publications},
			review={\MR{1637634}},
		}
		
		\bib{NT2018}{article}{
			author={Ne{\v c}asov{\'a}, S.},
			author={Tang, T.},
			title={On a singular limit for the compressible rotating {E}uler
				system},
			date={2018},
		}
		
		\bib{NP2007}{article}{
			author={Novotn{\'y}, A.},
			author={Pokorn{\'y}, M.},
			title={Stabilization to equilibria of compressible {N}avier-{S}tokes
				equations with infinite mass},
			date={2007},
			ISSN={0898-1221},
			journal={Comput. Math. Appl.},
			volume={53},
			number={3-4},
			pages={437\ndash 451},
			url={https://doi.org/10.1016/j.camwa.2006.02.044},
			review={\MR{2323702}},
		}
		
		\bib{S1986}{article}{
			author={Schochet, S.},
			title={The compressible {E}uler equations in a bounded domain: existence
				of solutions and the incompressible limit},
			date={1986},
			ISSN={0010-3616},
			journal={Comm. Math. Phys.},
			volume={104},
			number={1},
			pages={49\ndash 75},
			url={http://projecteuclid.org/euclid.cmp/1104114932},
			review={\MR{834481}},
		}
		
	\end{biblist}
\end{bibdiv}

\end{document}